 \theoremstyle{definition}
 \newtheorem{ddd}{Definition}[section]
 \theoremstyle{plain}
 \newtheorem{ttt}[ddd]{Theorem}
 \newtheorem{llll}[ddd]{Lemma}
 \theoremstyle{remark}
 \newcommand{\Ker}{\mathrm{Ker}}
 \newcommand{\opartial}{\overline{\partial}}
 \title[Uniformly distributed  systems]{Uniformly distributed systems of elements on metabelian Lie rings}
  \thanks{This work is financially supported by RFBR (grant 12-01-00084) and by Ministry of  Science of Russian Federation
   (state assignment No\,214/138, project 1052).}
  \author{E.\,N.\,Poroshenko, E.\,I.\,Timoshenko}
  \date{}
\begin{document}
   \maketitle
 \begin{abstract}
   In this paper, the notion of a uniformly distributed systems of elements on the variety of metabelian Lie algebras is introduced.
   This notion is analogous to one of a measure preserving systems of elements on group varieties. As the main result of the paper it was
   shown that on the variety of metabelian Lie algebras a system of elements is primitive iff it is uniformly
   distributed.
 \end{abstract}

 \section{Introduction}
   In this paper, we describe systems of elements that uniformly distributed on the variety of Lie rings (over the ring of integers
   $\mathbb Z)$. In
\cite{Pud1,Pud2}, elements uniformly distributed on groups
   were studied. Such elements are called measure preserving. In the papers
\cite{Ti12,Ti2} of the second author, the notion of a
   measure preserving element on a group variety was introduced and
   the measure preserving systems of elements on the varieties of nilpotent and metabelian groups were described.
   It turns out that such systems of elements are primitive i.e. they can be completed to the bases of a free group of the
   corresponding variety.

   Not only the uniform distribution on groups and rings but also other
   distributions can be considered. Some interesting problems
   arise in this case. An example of such problem is to describe the sets of elements
   having the same distribution on a given variety. For this reason, we think that
   it is appropriate to use the terms ``distribution of elements''
   and correspondingly ``distribution of systems of elements'' and although the terms ``measure preserving
   elements'' and ``uniformly distributed elements'' have the same meaning we will use the second one.

   The main result of the paper is Theorem~%
\ref{main}. It claims that a system of  elements of a free
   metabelian Lie ring is uniformly distributed on the variety of metabelian Lie rings iff it is primitive.
   To prove this theorem we had to find a primitivity criterion for this ring variety (Theorem~%
\ref{equivprimunimod}). The proof of this criterion is similar to
   one for the varieties of metabelian groups and metabelian Lie
   algebras, (see.~%
\cite{Rom1, Rom2, Ti3, Ti4, Ti5, Um93}) but it has some
   distinctions.

 \section{Preliminaries}
    In this paper, we assume that
    $X=\{x_1,x_2,\dots , x_n\}$ is a finite set. By
    $L$ denote the free Lie ring with the set of free generators
    $X$ and by
    $M$ denote the free metabelian Lie ring with the same set of generators, i.e.
    free and free metabelian Lie
    $\mathbb{Z}$-algebras respectively. By
    $L'$ and
    $M'$ denote the derived Lie rings of
    $L$ and
    $M$ respectively.

    Let
    $S$ be a Lie ring with generators
    $x_1,x_2,\dots,x_n$. By
    $U(S)$ denote the universal enveloping
    $\mathbb{Z}$-algebra of
    $S$.

    We denote Lie monomials by bracketed lowercase Latin characters and associative monomials by lowercase Latin  characters without brackets.
    Moreover, we use brackets to denote the images of Lie monomials by the natural map from
    $S$ to
    $U(S)$. Namely,
    $[v]$ in
    $U(S)$ is a homogeneous associative polynomial obtained by complete removal of brackets by the rule
    $[[u_1],[u_2]]=[u_1][u_2]-[u_2][u_1]$.

    Let
    $[v]$ be a Lie monomial. By
    $\ell([v])$ denote the length of this monomial i.e. the total number of
    entries of all generators in
    $[v]$.

    Any element
    $g$ in
    $M$ can be considered as a Lie polynomial  in the variables
    $x_1,x_2,\dots, x_n$. So, we denote it by
    $g(x_1,x_2,\dots, x_n)$. Although a representation of an element of
    $M$ is not unique, we will see below that one can use any such representation.
    \begin{ddd}
      A system of  elements
      $\{g_1,g_2,\dots,g_k\}$
      ($k\geqslant 1$) is said to be a \emph{primitive} system of elements of the metabelian  Lie ring
      $M$, if it can be included in a system of free generators of this
      ring.
    \end{ddd}

    Let
    $S^t$ be the direct sum of
    $t$ copies of
    $S$, where
    $S$ is an arbitrary Lie ring. Namely
    $S^t=\underbrace{S\oplus S\oplus\dots \oplus S}_{t \text{ times}}$.
    By
    $p^t$ denote a
    $t$-tuple
    $(p_1,p_2,\dots ,p_t)$ of elements in
    $S$ (then
    $p^t\in S^t$). Similarly, if
    $T$ be a module then by
    $T^t$ denote the module
    $\underbrace{T\oplus T\oplus\dots \oplus T}_{t \text{ times}}$.

    Let
    $R$ be a finite Lie ring of some variety
    $\mathfrak{M}$.  For an arbitrary system of elements
    $\{g_1,g_2,\dots,g_k\}$ in the free Lie ring with
    $n$ generators define the map
    $\psi_{g_1,g_2,\dots, g_k}: R^n\to R^k$ such that
    $$\psi_{g_1,g_2,\dots, g_k}(r^n)=(g_1(r_1,r_2,\dots, r_n),g_2(r_1,r_2,\dots, r_n),\dots g_k(r_1,r_2,\dots, r_n)).$$
    Namely, to obtain
    $\psi_{g_1,g_2,\dots, g_k}(r^n)$ one should substitute
    $r_1, r_2, \dots, r_n$ instead of
    $x_1,x_2,\dots,x_n$ in any representation of
    $g_1,g_2,\dots,g_k$ as Lie polynomials and take the obtained elements in the same order to form
    $k$-tuple. Since any map of generators of a free Lie ring
    $L_{\mathfrak{M}}$ in a variety
    $\mathfrak{M}$ to an arbitrary Lie ring of this variety can be extended to a homomorphism uniquely the
    values of
    $g_1(r^n),g_2(r^n),\dots, g_k(r^n)$ depend on the elements
    $g_1(x_1,x_2,\dots, x_n),g_2(x_1,x_2,\dots, x_n),\dots,g_k(x_1,x_2,\dots, x_n)$ in
    $L_{\mathfrak{M}}$ but do not depend on a polynomials representing these elements.
    Consider the uniform distribution on
    $R^n$. Namely, suppose that each element
    $r_1,r_2,\dots, r_n$ is chosen independently with probability
    $|R|^{-1}$. Then for any
    $r^n\in R^n$ probability to choose it is equal to
    $|R|^{-n}$.
    \begin{ddd}
      Let
      $\mathfrak{M}$ be an arbitrary variety of Lie rings and let
      $R$ be a finite Lie ring in this variety. The system of
      elements
      $\{g_1,g_2,\dots,g_k\}$
      ($k\geqslant 1$) of a free  Lie ring  in a variety
      $\mathfrak{M}$ is called  \emph{uniformly distributed on
      $R$}, if for any
      $p^k\in R^k$ probability that
      $\psi_{g_1,g_2,\dots, g_k}(r^n)=p^k$ is equal to
      $|R|^{-k}$, where
      $r^n$ is chosen at random. It means that if
      $r^n$ runs over
      $R^n$ then any
      $p^k\in R^k$ is the image of exactly
      $|R|^{n-k}$ elements of
      $R^n$.
    \end{ddd}
    \begin{ddd}
      A system of elements
      $\{g_1,g_2,\dots,g_k\}$
      ($k\geqslant 1$) of a free Lie ring of a variety
      $\mathfrak{M}$ is \emph{uniformly distributed on}
      $\mathfrak{M}$ if it is uniformly distributed on any finite
      Lie ring of this variety.
    \end{ddd}

    Clearly, the property of a system of elements to be uniformly distributed on the variety of
    metabelian Lie algebras does not depend on a set of
    free generators chosen in
    $M$. Indeed, let
    $y_1,y_2, \dots y_n$ be some other system of free generators of
    $M$. Then we have
    \begin{equation} \label{twogen}
      \begin{array} {ccc}
        x_1=f_1(y_1,y_2,\dots y_n); & \qquad & y_1=h_1(x_1,x_2,\dots x_n);\\
        x_2=f_2(y_1,y_2,\dots y_n); & \qquad & y_2=h_2(x_1,x_2,\dots x_n);\\
        \dotfill & & \dotfill\\
        x_n=f_n(y_1,y_2,\dots y_n). & \qquad & y_n=h_n(x_1,x_2,\dots x_n).
      \end{array}
    \end{equation}
    Here
    $f_i$ and
    $h_j$ are Lie polynomials. Let
    $$\hat{g}_l(y_1,\dots y_n)=g_l(f_1(y_1,\dots,y_n),\dots, f_{n}(y_1,\dots, y_n)),$$
    and
    $$\check{g}_l(x_1,\dots x_n)=\hat{g}_l(h_1(x_1,\dots x_n),\dots, h_n(x_1,\dots x_n))$$
    for
    $l=1,2,\dots,k$. Then, obviously,
    $g_l(x_1,\dots x_n)=\hat{g}_l(y_1,\dots, y_n)=\check{g}_l(x_1,\dots, x_n)$ in
    $M$.

    Let
    $\mu: R^n\to R^n$ take each
    $r^n\in R^n$ to
    $(s_1,s_2,\dots s_n)=s^n\in R^n$, where
    $s_i=f_i(r^n)$. Let us show that different
    $r^n$'s correspond to different
    $s^n$'s. If it is not the case then the images of some two elements under
    $\mu$ coincide. Since the images of
    $R^n$ are in
    $R^n$ and
    $R^n$ is finite there exists an
    $n$-tuple
    $\check{s}^n=(\check{s}_1,\dots, \check{s}_n)$ not lying in the image of
    $\mu$. Let
    $\check{r}_i=h_i(\check{s}^n)$. Then
(\ref{twogen}) implies
    $\check{s}_i=f_i(\check{r}^n)$, we get a contradiction. Consequently,
    $\mu$ is a bijection. Thus for any
    $p\in R$ the number of
    $n$-tuples
    $r^n\in R^n$ such that
    $(g_1(r^n),g_2(r^n),\dots,g_k(r^n))=p^k$ in
    $R^k$ is equal to the number of
    $n$-tuples
    $s^n\in R^n$, such that
    $(\hat{g}_1(s^n), \hat{g}_2(s^n), \dots,\hat{g}_k(s^n))=p^k$ in
    $R$.

    The group analogues of uniformly distributed
    elements are called measure preserving elements (see~%
\cite{Ti12}). Later on we will need the lemma from
 \cite{Ti12}.
    \begin{llll}\label{abgrlem}
      Let
      $A_n$ be a free abelian group of rank
      $n$. A system of elements
      $\{v_1,v_2,\dots,v_n\}$
      ($1\leqslant k\leqslant n$) of this group preserves measure on the variety of
      abelian groups iff it is primitive.
    \end{llll}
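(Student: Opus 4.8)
The plan is to translate everything into linear algebra over $\mathbb{Z}$ and then invoke the Smith normal form. First I would identify $A_n$ with $\mathbb{Z}^n$ and encode the system $\{v_1,\dots,v_k\}$ (a system of $k$ elements, $1\leqslant k\leqslant n$) by the $k\times n$ integer matrix $V=(v_{ij})$ whose $i$-th row lists the coordinates of $v_i$ in a fixed basis. With this dictionary, primitivity of $\{v_1,\dots,v_k\}$ means exactly that $V$ can be completed to a matrix in $\mathrm{GL}_n(\mathbb{Z})$; equivalently, the Smith normal form of $V$ is $(I_k\mid 0)$, equivalently the greatest common divisor of the maximal ($k\times k$) minors of $V$ equals $1$.

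Next I would reformulate the measure preserving condition. For a finite abelian group $B$ the evaluation map $\psi_{v_1,\dots,v_k}\colon B^n\to B^k$ is not merely a map of sets but a group homomorphism, given precisely by the matrix $V$ acting on column vectors over $B$. The key observation is that for any homomorphism of finite abelian groups every nonempty fibre is a coset of the kernel, so all nonempty fibres have the same cardinality $|B|^{n}/|\mathrm{im}\,\psi|$ while fibres over points outside the image are empty. Hence the uniform condition, that every point of $B^k$ has a fibre of size exactly $|B|^{n-k}$, holds if and only if $\psi$ is surjective. Thus the lemma reduces to the claim: $V$ is primitive $\iff$ the induced map $B^n\to B^k$ is surjective for every finite abelian group $B$.

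To finish I would apply the Smith normal form $V=PDQ$ with $P\in\mathrm{GL}_k(\mathbb{Z})$, $Q\in\mathrm{GL}_n(\mathbb{Z})$, and $D$ the $k\times n$ diagonal block $\mathrm{diag}(d_1,\dots,d_k)$ padded by a zero block, where $d_1\mid d_2\mid\dots\mid d_k$ are the elementary divisors. Since $P$ and $Q$ induce automorphisms of $B^k$ and $B^n$, surjectivity of $\psi$ is equivalent to surjectivity of the diagonal map, which in turn holds precisely when multiplication by each $d_i$ is surjective on $B$. If $V$ is primitive then every $d_i=1$ and $\psi$ is onto for all $B$. Conversely, if $V$ is not primitive then $d_k>1$; choosing a prime $p\mid d_k$ and $B=\mathbb{Z}/p\mathbb{Z}$ makes multiplication by $d_k$ the zero map, so the last coordinate is missed and $\psi$ is not onto.

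I expect the only genuine subtlety to be the fibre-counting step of the second paragraph, the passage from the raw combinatorial definition of uniform distribution to the clean algebraic condition of surjectivity, since this is where one must use that the evaluation map is a homomorphism and not just a map of finite sets; everything after that is standard Smith normal form bookkeeping.
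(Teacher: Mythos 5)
You should know at the outset that the paper contains no proof of this lemma to compare against: it is imported as a black box from the group-theoretic companion paper \cite{Ti12} (``Later on we will need the lemma from \cite{Ti12}'') and is used only once, inside the proof of Lemma~\ref{abrprimmeas}. So your argument is necessarily a ``different route'' --- it is a self-contained replacement for a citation --- and it is the natural one. Your three-step reduction is sound: on the abelian variety every evaluation map $\psi_{v_1,\dots,v_k}\colon B^n\to B^k$ is a group homomorphism given by the $k\times n$ coordinate matrix $V$ (here you also silently, and correctly, repair the statement's typo: the system has $k$ elements, not $n$); the coset structure of fibres of a homomorphism of finite groups converts ``uniformly distributed'' into ``surjective for every finite abelian $B$''; and Smith normal form settles when that surjectivity holds for all $B$. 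It is worth observing that your proof is a miniature of the strategy the paper itself runs one level up for metabelian Lie rings: your criterion ``gcd of the $k\times k$ minors of $V$ equals $1$'' is the $\mathbb{Z}$-analogue of Theorem~\ref{equivprimunimod} (the ideal of $k\times k$ minors of the Jacobi matrix is all of $\mathbb{Z}[X]$), and your fibre-counting step is the analogue of the kernel/surjectivity argument for the maps $\xi_{s_1,\dots,s_n}$ in the proof of Theorem~\ref{main}. What your approach buys is transparency and self-containment; what the paper's citation buys is brevity and consistency with its group-theoretic predecessor.

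One small patch is needed in your converse direction. From ``$V$ not primitive'' you conclude $d_k>1$, but the failure can also manifest as $d_k=0$ (rows dependent over $\mathbb{Q}$; e.g.\ $k=2$, $v_1=v_2=x_1$, elementary divisors $1,0$). Your construction survives unchanged: if $d_k=0$ then multiplication by $d_k$ is the zero map on \emph{every} nontrivial finite abelian group, so $B=\mathbb{Z}/p\mathbb{Z}$ for any prime $p$ already witnesses non-surjectivity, hence non-uniformity. Simply replace ``$d_k>1$'' by ``$d_k\neq 1$'' and treat the two cases $d_k>1$ and $d_k=0$ together.
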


    \begin{ddd} \label{unimod}
      For any associative commutative ring
      $S$ a vector
      $(s_1,s_2,\dots, s_k)\in S^k$ is called
      \emph{unimodular} if the ideal generated by the coordinates of this vector coincides with
      $S$.
    \end{ddd}
    The following definition generalizes Definition~%
\ref{unimod}.
    \begin{ddd}
      Let
      $S$ be an associative commutative ring
      and let
      $I$ be an ideal in this ring. A vector
      $(s_1,s_2,\dots, s_k)\in S^k$ is called
      $I$-\emph{modular} if the ideal generated by the
      coordinates of this vector coincides with
      $I$.
    \end{ddd}
    Let us formulate one more statement we will need in this paper.
    \begin{ttt}\label{art}
\cite{Art76} Let
      $S_0\subset S_1 \subset \dots  S_r \subset \dots$ be a chain of commutative rings satisfying the following
      properties.
      \begin{enumerate}
        \item
          For any
          $r$ the unit of
          $S_r$ lies in
          $S_0$.
        \item
          Any ring
          $S_r$ is a retract of
          $S_{r+1}$, the kernel of this retract is generated by
          an element
          $y_{r+1}$, and
          $y_{r+1}$ is not a zero divisor of the ring
          $\bigcup_{i}S_i$.
        \item
          For any
          $t\geqslant r$ the group
          $GL(t,S_r)$ of invertible matrices of order
          $t$ acts transitively on the set of unimodular vectors in
          $S_r^t$.
        \item
          If
          $I_r$ is the ideal in
          $S_r$ generated by
          $y_1,y_2,\dots, y_r$, then
          $I_r/I_r^2$ is a free
          $S_0$-module of rank
          $r$.
      \end{enumerate}
      Then for all
      $t \geqslant r$ the group
      $GL(t,S_r)$ acts transitively on the set of
      $I_r$-modular vectors in
      $S_r^t$.
    \end{ttt}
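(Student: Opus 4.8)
The plan is to argue by induction on $r$, establishing the statement for all orders $t\geqslant r$ at once. For $r=0$ the ideal $I_0$ is generated by the empty set, hence is $(0)$, so the only $I_0$-modular vector is the zero vector and transitivity is vacuous. Assume the conclusion for $r-1$: for every $t\geqslant r-1$ the group $GL(t,S_{r-1})$ acts transitively on $I_{r-1}$-modular vectors of $S_{r-1}^t$. Fix $t\geqslant r$ and an $I_r$-modular vector $v=(v_1,\dots,v_t)\in S_r^t$. Since ``lying in a common $GL(t,S_r)$-orbit'' is an equivalence relation, it suffices to move $v$ by a matrix of $GL(t,S_r)$ to the standard $I_r$-modular vector $e=(y_1,\dots,y_r,0,\dots,0)$.

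First I would reduce modulo $y_r$. Let $\pi_r\colon S_r\to S_{r-1}$ be the retraction of condition~(2), with $\ker\pi_r=(y_r)$ and with section the inclusion $S_{r-1}\subset S_r$. As $\pi_r$ is a ring homomorphism sending $y_1,\dots,y_{r-1}$ to the generators of $I_{r-1}$ and $y_r$ to $0$, the vector $\pi_r(v)$ is $I_{r-1}$-modular in $S_{r-1}^t$. By the induction hypothesis there is $A\in GL(t,S_{r-1})$ with $A\,\pi_r(v)=\pi_r(e)$. Viewing $A$ over $S_r$ through the inclusion, its determinant is a unit of $S_{r-1}$, and since the inclusion preserves the common unit (condition~(1)) this unit stays invertible, so $A\in GL(t,S_r)$. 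Replacing $v$ by $Av$, every coordinate of $v-e$ then lies in $\ker\pi_r=(y_r)$, so I may assume the normal form $v_i=y_i+y_r a_i$ for $i<r$ and $v_i=y_r a_i$ for $i\geqslant r$, with $a_i\in S_r$.

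The core of the argument is to show that the coefficient vector $(a_1,\dots,a_t)$ is unimodular modulo $J:=(y_1,\dots,y_{r-1})$, and then to exploit condition~(3). Writing $y_r=\sum_i\lambda_i v_i$ (possible since $v$ generates $I_r$) and substituting the normal form yields $y_r\bigl(1-\sum_i\lambda_i a_i\bigr)\in J$, so in $\overline S:=S_r/J$ the element $1-\sum_i\lambda_i a_i$ annihilates $\overline{y}_r$. Granting that $\overline{y}_r$ is a non-zero-divisor in $\overline S$, we get $\sum_i\lambda_i a_i\equiv 1\pmod J$, i.e.\ $(a_1,\dots,a_t)$ is unimodular over $\overline S$. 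Now the projection $S_r\to\overline S$ splits, the section being the subring generated by $S_0$ and $y_r$; hence a unimodular row over $\overline S$ lifts to a unimodular row over $S_r$, and condition~(3) descends to transitivity of $GL(t,\overline S)$ on unimodular vectors of $\overline S^t$. Applying this to $(a_1,\dots,a_t)$ and using $\overline v=\overline{y}_r\,(a_1,\dots,a_t)$ gives $\overline M\in GL(t,\overline S)$ with $\overline M\,\overline v=(\overline{y}_r,0,\dots,0)$; lifting $\overline M$ through the section to $M\in GL(t,S_r)$ and passing to $Mv$, I may assume $v=(y_r+\delta_1,\delta_2,\dots,\delta_t)$ with all $\delta_i\in J$, still generating $I_r$.

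To finish I would clear the lower-order terms: the first coordinate now supplies $y_r$, and reducing the remaining data modulo $y_r$ shows $(\delta_1,\dots,\delta_t)$ generates $I_{r-1}$ over $S_{r-1}$, so a further application of the induction hypothesis together with elementary operations that use the first coordinate brings $v$ to $(y_1,\dots,y_{r-1},y_r,0,\dots,0)$, which is $e$ up to a permutation matrix, completing the induction. I expect the genuine obstacle to be the two independence facts used in the third paragraph, namely that $\overline{y}_r$ is a non-zero-divisor modulo $(y_1,\dots,y_{r-1})$ and that $S_r\twoheadrightarrow S_r/(y_1,\dots,y_{r-1})$ splits via the subring generated by $S_0$ and $y_r$. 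Both encode that $y_1,\dots,y_r$ behave like independent variables over $S_0$, and proving them is precisely where the non-zero-divisor hypothesis in condition~(2) must be reconciled with the freeness of $I_r/I_r^2$ in condition~(4) (via a degree/filtration argument that rules out any relation forcing $y_r$ into $(y_1,\dots,y_{r-1})$).
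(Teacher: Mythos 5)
The paper itself gives no proof of this statement: it is quoted from Artamonov's paper \cite{Art76} and used as a black box, so your attempt can only be judged on its own merits. Your inductive frame is reasonable, and the first reduction is correct: applying the retraction $\pi_r$ and the inductive hypothesis to put $v$ into the normal form $v_i=y_i+y_ra_i$ ($i<r$), $v_i=y_ra_i$ ($i\geqslant r$), and then extracting the relation $y_r\bigl(1-\sum_i\lambda_ia_i\bigr)\in J$ with $J=(y_1,\dots,y_{r-1})$, is a sound start. Of the two facts you defer to the end, the first is actually unproblematic, though you point at the wrong tools for it: that $y_r$ is a non-zero-divisor modulo $J$ follows from conditions (1) and (2) alone, with no filtration argument and no use of (4). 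Indeed, if $y_rw=\sum_{i<r}y_ih_i$, apply $\pi_r$ to get $\sum_{i<r}y_i\pi_r(h_i)=0$ in $S_{r-1}$; writing $h_i=\pi_r(h_i)+y_rh_i'$ (possible since $h_i-\pi_r(h_i)\in\ker\pi_r=(y_r)$) gives $y_rw=y_r\sum_{i<r}y_ih_i'$, and since $y_r$ is not a zero divisor, $w\in J$.

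The second deferred fact is where the proof genuinely breaks. You claim that $S_r\to\overline{S}=S_r/J$ splits via the subring generated by $S_0$ and $y_r$, and you use this both to descend condition (3) to $\overline{S}$ and to lift $\overline{M}$ back into $GL(t,S_r)$. This is false under the stated hypotheses: take $S_0=k$ a field and $S_i=k[[y_1,\dots,y_i]]$. All four conditions hold (the rings are local, so (3) is immediate), yet the subring generated by $S_0$ and $y_r$ is the polynomial ring $k[y_r]$, whose image in $\overline{S}\cong k[[y_r]]$ is a proper subring --- it is not a section. More importantly, nothing in conditions (1)--(4) provides transitivity of $GL(t,\cdot)$ on unimodular vectors over the quotient $S_r/J$: condition (3) speaks only of the rings $S_i$ themselves, and obtaining such transitivity (or circumventing the need for it) is precisely the substance of the theorem, which your argument assumes rather than proves. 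Finally, your closing paragraph is circular: from $v=(y_r+\delta_1,\delta_2,\dots,\delta_t)$ with $\delta_i\in J$, reducing modulo $y_r$ and applying the inductive hypothesis merely returns $v$ to the normal form of your first step, and the unspecified ``elementary operations that use the first coordinate'' are exactly the missing content. So the attempt sets up the induction correctly but leaves the essential difficulty --- converting unimodularity of $(a_1,\dots,a_t)$ modulo $J$ into an actual $GL(t,S_r)$-move --- unresolved.
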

    Let us remind the definition of partial derivatives in free and free metabelian Lie rings. Consider the image of
    $f\in L$ in
    $U(L)$ under the natural embedding. For the sake of simplicity let us denote this image also by
    $f$. It is clear that there are unique elements
    $\partial'_1 f, \partial'_2 f, \dots, \partial'_n f\in U(L)$
    such that
    \begin{equation} \label{derivatives}
      f=x_1\partial'_1 f+x_2\partial'_2 f+\dots+x_n \partial'_n f.
    \end{equation}
    These elements are called \emph{partial (right) derivatives} of
    $f$. Evidently, we can say that the maps
    $\partial'_i: L\to U(L)$ are derivations. Namely, these maps have the following properties
    $$\partial'_i(f+g)=\partial'_i f+\partial'_i g;
          \qquad \partial'_i[f,g]=(\partial'_i f)g-(\partial'_i g) f$$
    (more precisely the second property should be written as
    $$\partial'_i[f,g]=(\partial'_i f)\omega(g)-(\partial'_i g) \omega(f),$$
    where
    $\omega: L \to U(L)$ is the natural embedding  of the free Lie ring
    $L$ to
    $U(L)$, i.e.
    $\omega(x_i)=x_i$ and by induction
    $\omega([[u],[v]])=\omega([u])\omega([v])-\omega([v])\omega([u])$).

    Let us define partial derivatives on a free metabelian Lie ring. By
    $\mathbb{Z}[X]$ denote  the set of commutative associative
    polynomials in the variables
    $x_1,x_2,\dots , x_n$. Let
    $\varphi: L\to M$ be the natural homomorphism i.e. the homomorphism
    taking each
    $x_i$ to itself and let
    $\varphi': U(L) \to \mathbb{Z}[X]$ also be the natural homomorphism
    (that is
    $\varphi'(x_i)= x_i$). Consider the maps
    $\partial_i=\varphi'\circ\partial'_i\circ \varphi^{-1}: M\to \mathbb{Z}[X]$.
    It is easy to see that
    $\partial_i$ are well defined. Indeed, the only difficulty in defining
    this map is that
    $\varphi$ is not an isomorphism if
    $|X|>1$. Therefore, each element
    $g\in M$ has more than one pre-image under
    $\varphi$ in
    $L$. However, if
    $g_1,g_2\in L$ are such that
    $\varphi (g_1)=\varphi(g_2)=g$ then
    $g_2-g_1\in \Ker\,\varphi=\langle[L',L']\rangle$, where
    $L'$ is the derivative ring of
    $L$. Therefore, we have
    $g_2-g_1=\sum_{j} \alpha_j[[u_j],[v_j]]$, where
    $[u_j]$ are monomials in
    $[L',L']$,
    $[v_j]$ are monomials in
    $L$, and
    $\alpha_j\in \mathbb{Z}$.
    So, we obtain
    \begin{equation} \label{derker}
      \begin{split}
        \varphi'\circ \partial'_i (g_2-g_1)= & \varphi' \circ \partial'_i \biggl(\sum_j \alpha_j[[u_j],[v_j]]\biggr)\\
          = & \varphi'\biggl( \sum_j \alpha_j \partial'_i([[u_j],[v_j]])\biggr) \\
          = & \varphi' \biggl(\sum_j \alpha_j (\partial'_i([u_j])[v_j]-\partial'_i([v_j])[u_j]) \biggr)\\
          = & \sum_j \alpha_j  (\varphi'\circ\partial'_i([u_j])\varphi'([v_j])-\varphi'\circ\partial'_i([v_j])\varphi'([u_j])).
      \end{split}
    \end{equation}

    Next, let
    $[w]$ be in
    $L'$. Then
    $\varphi'([w])=0$. Indeed,
    $[w]=[[w_1],[w_2]]$ for some Lie monomials
    $[w_1],[w_2]$ in
    $L$ consequently
    $\varphi'([w])=\varphi'([w_1])\varphi'([w_2])-\varphi'([w_2])\varphi'([w_1])$.
    If
    $[w]=[[w_1],[w_2]]$ lies in
    $[L',L']$ then
    $\partial'_i([w])=\partial'_i([w_1])[w_2]-\partial'_i([w_2])[w_1]$.
    Since
    $[w_1],[w_2]\in L'$ we obtain as above that
    $\partial'_i([w])=0$. Therefore, the value of
(\ref{derker}) is
    $0$. So, the value of
    $\partial(g)$ does not depend on the element in
    $\varphi^{-1}(g)$ we are taking.

    Let
    $g$ be an element of a free (metabelian) Lie algebra.
    It follows from the definition of derivatives that the value of a partial derivative of
    $g$ depends on the choice of the system of free generators.

    Let
    $S$ be a metabelian Lie ring and let
    $g(x_1,x_2,\dots,x_n)$,
    $f_1(x_1,x_2,\dots, x_n)$,
    $\dots$,
    $f_n(x_1,x_2,\dots, x_n)$ be Lie polynomials.
    Substitute
    $f_1,f_2,\dots, f_n$ for
    $x_1,x_2,\dots, x_n$ in
    $\partial_i g$. By
    $\partial_i g(f_1,f_2,\dots,f_n)$ denote the obtained
    expression considered as an element of
    $U(S)$.

    Given the system of elements
    $\{g_1,g_2,\dots g_k\}$ of
    $M$ by
    $\mathcal{J}(g_1,g_2,\dots, g_k)$ denote the \emph{Jacobi matrix} of this
    system, i.e. the matrix
    $$(\partial_i g_j)_{n\times k}=
      \begin{pmatrix}
        \partial_1 g_1 & \partial_1 g_2 & \dots &\partial_1 g_k  \\
        \partial_2 g_1 & \partial_2 g_2 & \dots &\partial_2 g_k  \\
        \dots  & \dots & \dots & \dots \\
        \partial_n g_1 & \partial_n g_2 & \dots &\partial_n g_k  \\
      \end{pmatrix}.$$
    Let
    $$f_1(x_1,x_2,\dots,x_n),f_2(x_1,x_2,\dots, x_n),\dots,f_n(x_1,x_2,\dots,x_n)$$
    be Lie polynomials. Substitute these polynomials in
    $\mathcal{J}(g_1,g_2,\dots, g_k)$ for the corresponding
    $x_1,x_2,\dots,x_n$ and denote by
    $\mathcal{J}_{f_1,f_2,\dots, f_n}(g_1,g_2,\dots, g_k)$ the obtained
    matrix.

    Let
    $T$ be the free right
    $\mathbb{Z}[X]$-module with a basis
    $t_1,t_2,\dots t_n$. Consider the set
    $\mathcal{M}$ of square matrices of the second order
    $$
    \begin{pmatrix}
      l    & 0 \\
      \tau & 0
    \end{pmatrix},
    $$
    where
    $l$ is a linear polynomial in
    $\mathbb{Z}[X]$ and
    $\tau\in T$. Define the multiplication on
    $\mathcal{M}$ by the rule
    $[S_1,S_2]=S_1S_2-S_2S_1$. It is easy to show that
    $\mathcal{M}$ is a metabelian Lie ring with respect to this multiplication.

    Let
    $\nu$ be the homomorphism from
    $M$ to
    $\mathcal{M}$ taking each generator
    $x_i$ to the matrix
    $$\begin{pmatrix}
         y_i & 0 \\
         t_i & 0
       \end{pmatrix}.$$
    It is well-known (see, for example,
\cite{Art77, Art76}) that the analogue of
    $\nu$ for Lie algebras over a field (for instance, over
    $\mathbb{Q}$) is an embedding, . Since
    $\mathbb{Z}\subseteq \mathbb{Q}$, this map is obviously an embedding of
    $M$ into
    $\mathcal{M}$.

    By
    $\mathbb{Z}^{1}[X]$ denote the set of linear polynomials (without a free term) in
    variables from the set
    $X$. Next, let
    $p,q,m$ be positive integers and
    $m\geqslant 2$. By
    $I_{p,q,m}$ denote the ideal in
    $\mathbb{Z}[X]$ generated by
    $m$ and
    $x_i^p(x^q-1)$, where
    $i=1,2,\dots,n$. Besides, by
    $\mathbb{Z}_{p,q,m}[X]$ denote the commutative associative ring
    $\mathbb{Z}[X]/I_{p,q,m}$, by
    $\mathbb{Z}^1_{m}[X]$ the set of linear polynomials in
    $\mathbb{Z}_{p,q,m}[X]$, and by
    $T_{p,q,m}$ the free right
    $n$-generated
    $\mathbb{Z}_{m}$-module. To denote the elements of a basis of
    $T_{p,q,m}$ we use the characters
    $t_1,t_2,\dots,t_n$ as well as to denote the elements of the basis of
    $T$. Finally, by
    $\mathcal{M}_{p,q,m}$ we denote the Lie ring of
    $2\times 2$ matrices of the form
    $$
    \begin{pmatrix}
      \overline{l}     & 0\\
      \overline{\tau}  & 0
    \end{pmatrix},
    $$
    where
    $\overline{l}\in\mathbb{Z}_{p,q,m}[X]$,
    $\overline{\tau} \in T_{p,q,m}$, and the Lie multiplication is defined in the natural way.
    Obviously,
    $\mathcal{M}_{p,q,m}$ is a finite metabelian Lie ring.

    Let
    $\eta: \mathbb{Z}[X]\to \mathbb{Z}_{p,q,m}[X]$ be
    the natural homomorphism. By
    $\opartial_i g$ denote the image of
    $\partial_i g$ under the action of
    $\eta$. Respectively, by
    $\overline{\mathcal{J}}_{p,q,m}(g_1,g_2,\dots,g_k)$ let us denote the
    Jacobi matrix for a system of elements
    $\{g_1,g_2,\dots,g_k\}$ over
    $\mathbb{Z}_{p,q,m}[X]$, i.e. the matrix
    $$(\opartial_ig_j)_{n\times k}=
      \begin{pmatrix}
        \opartial_1 g_1 & \opartial_1 g_2 & \dots &\opartial_1 g_k  \\
        \opartial_2 g_1 & \opartial_2 g_2 & \dots &\opartial_2 g_k  \\
        \dots  & \dots & \dots & \dots \\
        \opartial_n g_1 & \opartial_n g_2 & \dots &\opartial_n g_k  \\
      \end{pmatrix}.$$

    The following diagram shows the relationship among the rings described
    above:
    \begin{center}
      \begin{picture}(160,70)
         \put(0,50){$L$}
         \put(30,50){$U(L)$}
         \put(75,50){$\mathbb{Z}[X]$}
         \put(120,50){$\mathbb{Z}_{p,q,m}[X]$}
         \put(10,54){\vector(1,0){16}}
         \put(55,54){\vector(1,0){16}}
         \put(100,54){\vector(1,0){16}}
         \put(4,46){\vector(0,-1){36}}
         \put(-2,0){$M$}
         \put(10,7){\vector(2,1){72}}
         \put(10,4){\vector(3,1){115}}
         \put(14,58){{\footnotesize$\partial'_i$}}
         \put(59,58){{\footnotesize$\varphi'$}}
         \put(104,58){{\footnotesize$\eta$}}
         \put(-4,28){{\footnotesize$\varphi$}}
         \put(37,28){{\footnotesize$\partial_i$}}
         \put(67,15){{\footnotesize$\opartial_i$}}
      \end{picture}
    \end{center}

  \section{Property of primitivity and uniform distribution of elements}
    Let
    $g\in M$. Later on, by
    $g'$ we denote the element of
    $M$ such that
    $g-g'$ is a linear combination of the elements in
    $X$. This linear combination itself will be denoted by
    $\overline{g}$. The identities of a free metabelian Lie ring are
    homogeneous, therefore, for any
    $g$ the elements
    $\overline{g}$ and
    $g'$ are defined uniquely and
    $g=\overline{g}+g'$. By
    $\Delta$ denote the ideal in
    $\mathbb{Z}[X]$ generated by the set
    $X$. Finally, for any matrix
    $\mathbf{A}=(a_{ij})_{n\times n}$ we will use the following notation
    $$\sigma_i(\mathbf{A})=\sum_{j=1}^n x_ja_{ji}.$$

    \begin{llll} \label{sigma}
      Let
      $\mathbf{A}$ be an invertible
      $n\times n$ matrix such that its coefficients are in
      $\mathbb{Z}[X]$. Then
      $\sigma_1(\mathbf{A}), \sigma_2(\mathbf{A}), \dots, \sigma_n(\mathbf{A})$ generates the ideal
      $\Delta$.
    \end{llll}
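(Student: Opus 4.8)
The plan is to recognize the quantities $\sigma_i(\mathbf{A})$ as the coordinates of a single matrix product and then exploit the invertibility of $\mathbf{A}$ directly. Writing $\mathbf{x}=(x_1,x_2,\dots,x_n)$ for the row vector of generators, the defining formula $\sigma_i(\mathbf{A})=\sum_{j=1}^n x_j a_{ji}$ says exactly that $(\sigma_1(\mathbf{A}),\sigma_2(\mathbf{A}),\dots,\sigma_n(\mathbf{A}))=\mathbf{x}\mathbf{A}$. I denote by $J$ the ideal of $\mathbb{Z}[X]$ generated by $\sigma_1(\mathbf{A}),\dots,\sigma_n(\mathbf{A})$; the goal is to prove $J=\Delta$ by establishing two inclusions.

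For the inclusion $J\subseteq\Delta$, I would note that every $\sigma_i(\mathbf{A})$ is a $\mathbb{Z}[X]$-linear combination of $x_1,\dots,x_n$, each generator $x_j$ carrying the coefficient $a_{ji}\in\mathbb{Z}[X]$. Hence $\sigma_i(\mathbf{A})\in\Delta$ for every $i$, and since $\Delta$ is an ideal it contains the ideal $J$ generated by these elements. This direction requires nothing beyond the definition of $\sigma_i$.

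The reverse inclusion $\Delta\subseteq J$ is where invertibility enters, and it is the only substantive point. Because $\mathbf{A}$ is invertible over $\mathbb{Z}[X]$, its inverse $\mathbf{A}^{-1}$ again has entries in $\mathbb{Z}[X]$. Multiplying the identity $(\sigma_1(\mathbf{A}),\dots,\sigma_n(\mathbf{A}))=\mathbf{x}\mathbf{A}$ on the right by $\mathbf{A}^{-1}$ yields $\mathbf{x}=(\sigma_1(\mathbf{A}),\dots,\sigma_n(\mathbf{A}))\mathbf{A}^{-1}$, that is,
$$x_j=\sum_{i=1}^n \sigma_i(\mathbf{A})\,(\mathbf{A}^{-1})_{ij}\qquad(j=1,2,\dots,n).$$
Since each coefficient $(\mathbf{A}^{-1})_{ij}$ lies in $\mathbb{Z}[X]$, every generator $x_j$ is an element of $J$; as $\Delta$ is generated by the $x_j$, this gives $\Delta\subseteq J$. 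Combining the two inclusions proves $J=\Delta$. I expect no genuine obstacle here: once the $\sigma_i(\mathbf{A})$ are read as the coordinates of $\mathbf{x}\mathbf{A}$, the entire statement reduces to the observation that right multiplication by an invertible matrix over $\mathbb{Z}[X]$ does not change the ideal generated by the coordinates of a vector.
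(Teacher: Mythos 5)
Your proposal is correct and is essentially identical to the paper's own proof: the paper's entire argument is the chain of equalities $(x_1,\dots,x_n)=(x_1,\dots,x_n)\mathbf{A}\mathbf{A}^{-1}=(\sigma_1(\mathbf{A}),\dots,\sigma_n(\mathbf{A}))\mathbf{A}^{-1}$, which is exactly your reverse inclusion $\Delta\subseteq J$, with the easy inclusion $J\subseteq\Delta$ left implicit. You merely spell out both inclusions explicitly, which is a fine (slightly more complete) presentation of the same argument.
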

    \begin{proof}
      The statement of this lemma follows from the following chain of equalities.
      $$(x_1,x_2,\dots, x_n)=(x_1,x_2,\dots, x_n)\mathbf{A}\mathbf{A}^{-1}=
        (\sigma_1(\mathbf{A}),\sigma_2(\mathbf{A}),\dots, \sigma_n(\mathbf{A}))\mathbf{A}^{-1}.$$
    \end{proof}
    \begin{llll}\label{automorph}
      Let
      $\mu: M\to M$ be an automorphism of
      $M$ such that
      $\mu(x_i)=g_i$. Then the Jacobi matrix
      $\mathcal{J}(g_1,g_2,\dots,g_n)$ is invertible in the ring of
      $n\times n$ matrices over
      $\mathbb{Z}[X]$.
    \end{llll}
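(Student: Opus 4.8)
The plan is to reduce the statement to a \emph{chain rule} for the derivatives $\partial_i$, expressing the fact that Jacobi matrices multiply (up to a substitution) under composition of endomorphisms, and then to feed the inverse automorphism into this rule. Write $\mu^{-1}(x_i)=h_i$ for the elements of $M$ determined by $\mu^{-1}$, so that $\mu(h_i)=x_i$, i.e. $h_i(g_1,\dots,g_n)=x_i$ in $M$. If composing the substitution $x_j\mapsto g_j$ with $x_i\mapsto h_i$ multiplies the corresponding Jacobi matrices, then $\mu(h_i)=x_i$ will force the product of $\mathcal J(g_1,\dots,g_n)$ with a second matrix over $\mathbb Z[X]$ to be the identity, and invertibility follows.

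The engine is the embedding $\nu\colon M\to\mathcal M$. First I would record the basic formula
\[\nu(g)=\begin{pmatrix}\overline g & 0\\ \sum_{k} t_k\,\partial_k g & 0\end{pmatrix},\qquad g\in M,\]
checked on the generators (where $\overline{x_i}=x_i$ and $\partial_k x_i=\delta_{ki}$) and propagated through brackets using $[S_1,S_2]=S_1S_2-S_2S_1$ in $\mathcal M$ together with the derivation rule $\partial_i[u,v]=(\partial_i u)\,\overline v-(\partial_i v)\,\overline u$; the top-left entry is additive and kills $M'$, hence equals the linear part $\overline g$, while the bottom-left entry collects the derivatives. Next, let $\eta_g\colon\mathbb Z[X]\to\mathbb Z[X]$ be the ring homomorphism $x_m\mapsto\overline{g_m}$, put $\beta_j:=\sum_k t_k\,\partial_k g_j$, and let $\tilde\lambda\colon T\to T$ be the $\eta_g$-semilinear map $t_j\mapsto\beta_j$. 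A short check on brackets shows that
\[\lambda\begin{pmatrix} a & 0\\ v & 0\end{pmatrix}=\begin{pmatrix}\eta_g(a) & 0\\ \tilde\lambda(v) & 0\end{pmatrix}\]
is a Lie homomorphism $\mathcal M\to\mathcal M$. Since $\lambda\circ\nu$ and $\nu\circ\mu$ agree on each generator $x_j$ (both send it to $\nu(g_j)$), they coincide on all of $M$. Comparing the bottom-left entries of $\nu(\mu f)=\lambda(\nu f)$ for arbitrary $f\in M$ and using that the $t_k$ form a free basis yields the chain rule
\[\partial_k(\mu f)=\sum_{j=1}^{n}(\partial_k g_j)\,\eta_g(\partial_j f).\]

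Applying this with $f=h_i$ and using $\mu(h_i)=x_i$, so that $\partial_k(\mu h_i)=\delta_{ki}$, gives
\[\delta_{ki}=\sum_{j=1}^{n}(\partial_k g_j)\,\eta_g(\partial_j h_i).\]
Setting $Q_{ji}=\eta_g(\partial_j h_i)\in\mathbb Z[X]$, this reads $\mathcal J(g_1,\dots,g_n)\,Q=I_n$ over the commutative ring $\mathbb Z[X]$. Taking determinants, $\det\mathcal J(g_1,\dots,g_n)$ is a unit of $\mathbb Z[X]$, so $\mathcal J(g_1,\dots,g_n)$ is invertible in the matrix ring over $\mathbb Z[X]$ (via its adjugate), which is exactly the claim; note that only the surjectivity of $\mu$, used to produce the $h_i$, enters.

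The genuinely routine parts are the two inductive verifications (the formula for $\nu(g)$ and that $\lambda$ respects brackets). The step where I expect the real work is organizing the bookkeeping with the module $T$ and the semilinear map $\tilde\lambda$ so that the identity $\lambda\circ\nu=\nu\circ\mu$ cleanly produces the \emph{multiplicative} chain rule; once that is in hand, the passage to invertibility is a one-line determinant argument.
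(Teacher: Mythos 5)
Your proof is correct, and its top-level strategy coincides with the paper's: establish a chain rule for the derivatives $\partial_i$ under composition of endomorphisms, then feed in elements $h_i$ with $\mu(h_i)=x_i$, so that $\partial_k(\mu h_i)=\delta_{ki}$ turns the chain rule into the matrix identity $\mathcal{J}(g_1,\dots,g_n)\,Q=\mathbf{E}$ over $\mathbb{Z}[X]$ with $Q=\bigl(\eta_g(\partial_j h_i)\bigr)$, which is exactly the paper's equation (\ref{chainruleinverse}). Where you genuinely diverge is at the two ends of this skeleton, and in both places your version is more self-contained. The paper merely asserts its ``chain-rule'' formulas, while you prove them by comparing the homomorphisms $\lambda\circ\nu$ and $\nu\circ\mu$ into the matrix ring $\mathcal{M}$ and reading off the bottom-left entries against the free basis of $T$; this is a real addition, and it is essentially the computation the paper carries out only later, in a different ring, as Lemma~\ref{matrixsubst} for $\mathcal{M}_{p,q,m}$ (your semilinear bookkeeping with $\eta_g$ and $\tilde\lambda$ is sound, as is the metabelian derivation rule $\partial_i[u,v]=(\partial_i u)\overline{v}-(\partial_i v)\overline{u}$ on which the induction rests). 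At the other end, the paper converts right invertibility into two-sided invertibility by citing Umirbaev \cite{Um93} for invertibility of the Jacobi matrix over $\mathbb{Q}[X]$ and then identifying the right and left inverses, whereas you simply take determinants: $\det\mathcal{J}(g_1,\dots,g_n)$ is a unit of the commutative ring $\mathbb{Z}[X]$, so the adjugate supplies a two-sided inverse. Your finish is the more elementary one --- over a commutative ring a square matrix with a one-sided inverse is automatically invertible --- and it removes the only external dependency in the paper's argument; the trade-off is that the paper's proof is shorter modulo its citations, while yours proves everything it uses.
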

    \begin{proof}

      Let
      $\mu_1$ and
      $\mu_2$ be endomorphisms of the free metabelian Lie ring
      $M$ defined as follows:
      $\mu_j(x_i)=y_{j,i}(x_1,x_2,\dots,x_n)$. Then we have
      \begin{equation*}
        \mu_2\circ \mu_1(x_i)=\mu_2(\mu_1(x_i))=y_{1,i}(y_{2,1}(x_1,\dots,x_n),\dots,y_{2,n}(x_1,\dots,x_n)).
      \end{equation*}
      By
      $z_{j}(x_1,x_2,\dots,x_n)$ denote
      $y_{1,j}(y_{2,1}(x_1,\dots,x_n),\dots,y_{2,n}(x_1,\dots,x_n))$. The fol\-lo\-wing
      ``chain-rule'' formulas hold:
      \begin{equation*}
        \partial_i z_j =\sum_{k=1}^{n} \partial_iy_{2,k}
         \partial_k y_{1,j}(\overline{y}_{2,1},\overline{y}_{2,2},\dots, \overline{y}_{2,n}).
      \end{equation*}
      These formulas imply
      \begin{equation}\label{chainrule}
        \mathcal{J}(z_1,z_2,\dots,z_n)=\mathcal{J}(y_{2,1},y_{2,2},\dots,y_{2,n})
           \mathcal{J}_{\overline{y}_{2,1},\overline{y}_{2,2},\dots,\overline{y}_{2,n}}(y_{1,1},y_{1,2},\dots,y_{1,n}).
      \end{equation}

      Let
      $\mu$ be an automorphism of
      $M$. Then
      $\mu^{-1}$ is also an automorphism. Let
      $\mu^{-1}(x_i)=f_i$. Using
(\ref{chainrule}) for the identity automorphism we obtain
      \begin{equation}\label{chainruleinverse}
        \mathbf{E}=\mathcal{J}(x_1,x_2,\dots,x_n)=\mathcal{J}(g_1,g_2,\dots,g_n)
          \mathcal{J}_{\overline{g}_1,\overline{g}_2,\dots,\overline{g}_n}(f_1,f_2,\dots,f_n).
      \end{equation}
      Note that the  elements of
      $\mathcal{J}_{\overline{g}_1,\overline{g}_2,\dots,\overline{g}_n}(f_1,f_2,\dots,f_n)$
      are polynomials with integer coefficients. It implies that
      the matrix
      $\mathcal{J}(g_1,g_2,\dots,g_n)$ is invertible from the right in the ring of matrices over
      $\mathbb{Z}[X]$. By
\cite{Um93} the Jacobi matrix
      $\mathcal{J}(g_1,g_2,\dots,g_n)$ is invertible in the ring of matrices over
      $\mathbb{Q}[X]$. Consequently, the right and left inverses of this matrix coincide.
      Thus,
      $\mathcal{J}(g_1,g_2,\dots,g_n)$ is invertible in the ring of matrices over
      $\mathbb{Z}[X]$.
    \end{proof}

    \begin{ttt}\label{equivprimunimod}
      A system of elements
      $\{g_1,g_2,\dots,g_k\}$
      ($1\leqslant k\leqslant n$) of the free metabelian Lie ring
      $M$ is primitive iff the ideal generated by all
      $k\times k$ minors of
      $\mathcal{J}(g_1,g_2, \dots, g_k)$ coincides with
      $\mathbb{Z}[X]$.
    \end{ttt}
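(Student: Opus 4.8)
The plan is to prove the two implications separately, using Lemma~\ref{automorph} together with the chain rule \eqref{chainrule} for the easy direction and Artamonov's Theorem~\ref{art} for the hard one.

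For the implication ``primitive $\Rightarrow$ unit ideal of minors'' I would complete the given system to a system of free generators $g_1,\dots,g_n$ of $M$, so that $x_i\mapsto g_i$ defines an automorphism. By Lemma~\ref{automorph} the full Jacobi matrix $\mathcal{J}(g_1,\dots,g_n)$ is invertible over $\mathbb{Z}[X]$, hence its determinant is a unit, i.e. $\pm1$. The first $k$ columns of this $n\times n$ matrix are precisely $\mathcal{J}(g_1,\dots,g_k)$, since $\partial_i g_j$ does not depend on the remaining elements of the system. Expanding $\det\mathcal{J}(g_1,\dots,g_n)$ by Laplace along these first $k$ columns writes $\pm1$ as a $\mathbb{Z}[X]$-linear combination of the $k\times k$ minors of $\mathcal{J}(g_1,\dots,g_k)$, with the complementary $(n-k)\times(n-k)$ minors as coefficients. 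Hence $1$ lies in the ideal generated by those minors, so that ideal is all of $\mathbb{Z}[X]$.

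For the converse I would first pass to linear parts. Since $\partial_i x_j=\delta_{ij}$ and every partial derivative of an element of $M'$ lies in $\Delta$, the value of $\mathcal{J}(g_1,\dots,g_k)$ at $X=0$ is the integer matrix $A=(a_{ij})$ whose $j$-th column is the coordinate vector of $\overline{g}_j$ in the basis $x_1,\dots,x_n$ of $M/M'$. Evaluating the unit-ideal relation at $X=0$ shows the $k\times k$ minors of $A$ generate $\mathbb{Z}$, so $\{\overline{g}_1,\dots,\overline{g}_k\}$ is a primitive system in the free abelian group $M/M'\cong A_n$ (the abelian case, cf.~Lemma~\ref{abgrlem}). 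Lifting the corresponding linear change of variables to an automorphism of $M$, which by \eqref{chainrule} multiplies the Jacobi matrix by an invertible integer matrix and thus preserves the ideal of minors, I may assume henceforth that $\overline{g}_i=x_i$, i.e. $g_i=x_i+v_i$ with $v_i\in M'$ for $i=1,\dots,k$.

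It then remains to construct an automorphism $\mu$ of $M$ with $\mu(g_i)=x_i$ for $i\leqslant k$; then $g_i=\mu^{-1}(x_i)$ are members of the free generating set $\{\mu^{-1}(x_1),\dots,\mu^{-1}(x_n)\}$, so the system is primitive. Equivalently, invoking the converse of Lemma~\ref{automorph} (the result of \cite{Um93} that an endomorphism with invertible Jacobi matrix is an automorphism), it suffices to append columns that are Jacobi columns of suitable Lie elements $g_{k+1},\dots,g_n$ so that the completed $n\times n$ matrix is invertible over $\mathbb{Z}[X]$. Note that by \eqref{derivatives} one has $\sigma_i(\mathcal{J})=\sum_j x_j\partial_j g_i=\overline{g}_i$, so by Lemma~\ref{sigma} the vector attached to an invertible matrix is $\Delta$-modular; the completion problem is thus to move a $\Delta$-modular system to the standard one by an element of the general linear group. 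This is exactly the transitivity furnished by Theorem~\ref{art}, which I would apply to the chain $\mathbb{Z}=S_0\subset S_1\subset\cdots$ with $S_r=\mathbb{Z}[x_1,\dots,x_r]$, $y_r=x_r$, and $I_r=\Delta_r$, after checking its four hypotheses.

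The hard part is this converse, and within it the completion step. I expect three obstacles. First, verifying the chain hypotheses of Theorem~\ref{art} for $\mathbb{Z}[x_1,\dots,x_r]$, above all condition (3), transitivity of $GL$ on unimodular vectors over a polynomial ring, which over the coefficient ring $\mathbb{Z}$ (rather than a field) is the substantive and delicate input. Second, checking that the matrix completion produced by Theorem~\ref{art} can be realized by genuine Lie polynomials $g_{k+1},\dots,g_n$: the columns of a Jacobi matrix are constrained by the integrability relation $\sum_i x_i\partial_i g=\overline{g}$ coming from \eqref{derivatives}, so one must ensure the completing matrix really is a Jacobi matrix. Third, adapting the ``invertible Jacobian $\Rightarrow$ automorphism'' criterion of \cite{Um93} from the field case to $\mathbb{Z}$, as was already required in Lemma~\ref{automorph}.
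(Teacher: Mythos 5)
Your first direction (primitive $\Rightarrow$ unit ideal of minors) is exactly the paper's argument: complete to a free generating set, apply Lemma~\ref{automorph}, and expand the determinant of $\mathcal{J}(g_1,\dots,g_n)$ along the first $k$ columns; that part is fine. The genuine gap is in the converse, in how you configure Theorem~\ref{art}. You reduce the completion problem to ``moving a $\Delta$-modular vector to the standard one by an element of $GL(n,\mathbb{Z}[X])$,'' with the chain $S_r=\mathbb{Z}[x_1,\dots,x_r]$ and $I_n=\Delta$. But that transitivity acts by right multiplication on all $n$ columns at once: a matrix $\mathbf{D}\in GL(n,\mathbb{Z}[X])$ with $(\sigma_1(\mathbf{A}),\dots,\sigma_n(\mathbf{A}))\mathbf{D}=(x_1,\dots,x_n)$ mixes every column of $\mathbf{A}$, so $\mathbf{A}\mathbf{D}$ no longer has $\mathcal{J}(g_1,\dots,g_k)$ as its first $k$ columns. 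Even after the realization step you would only obtain \emph{some} free generating set of $M$, not one containing $g_1,\dots,g_k$, so primitivity of the original system does not follow. Moreover, before any transitivity can be invoked you need an invertible completion $\mathbf{A}=\bigl(\,\mathcal{J}(g_1,\dots,g_k)\mid\mathbf{C}'\,\bigr)$ to exist at all; your plan never produces one. The paper does this first, by iterated application of Suslin's theorem \cite{Su76} to the unimodular columns (the matrices $\mathbf{B}_1,\dots,\mathbf{B}_k$), using Suslin not merely as a verification of hypothesis (3) of Theorem~\ref{art} but as a separate, essential step.

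The correct configuration — and the actual role of Theorem~\ref{art} in the paper — is to act only on the appended $n-k$ columns. After column operations that subtract $\mathbb{Z}[X]$-combinations of the first $k$ columns (these leave $\mathcal{J}(g_1,\dots,g_k)$ untouched), one arranges that $\sigma_{k+1},\dots,\sigma_n$ lie in the subring $R=\mathbb{Z}[\overline{g}_{k+1},\dots,\overline{g}_n]$ generated by the complementary linear parts, where they generate the \emph{proper} ideal $I_{n-k}=\langle\overline{g}_{k+1},\dots,\overline{g}_n\rangle$ — not the unit ideal and not $\Delta$. This is precisely why the $I$-modular strengthening in Theorem~\ref{art} is needed; it is applied with $t=n-k$ to the chain $\mathbb{Z}\subset\mathbb{Z}[\overline{g}_{k+1}]\subset\dots\subset R$ (in your normalization this would be the chain on the \emph{last} $n-k$ variables, not your $S_r=\mathbb{Z}[x_1,\dots,x_r]$), and the resulting $\mathbf{D}_1\in GL(n-k,R)$ is inserted as the block-diagonal matrix with identity block $\mathbf{E}_k$, which fixes the first $k$ columns. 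Your obstacles (b) and (c) are real but are solvable, and the paper resolves them: one decomposes $\mathbf{C}\mathbf{D}=\mathbf{\overline{F}}+\mathbf{F'}$, checks $\sigma_i(\mathbf{F'})=0$, invokes \cite{Um93} to realize $\mathbf{C}\mathbf{D}$ as $\mathcal{J}(g_1,\dots,g_n)$ with $g_i'\in M_{\mathbb{Q}}'$, and then uses the explicit derivative formula for basis monomials to see that the coefficients are integers, so $g_i'\in M'$. As written, though, your proposal leaves those steps open, and the column-mixing problem above is a flaw in the plan itself rather than a loose end.
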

    \begin{proof}
      Suppose that the ideal generated by all
      $k\times k$ minors of
      $\mathcal{J}(g_1,g_2,\dots, g_k)$ coincides with
      $\mathbb{Z}[X]$. Then each column of
      $\mathcal{J}(g_1,g_2,\dots,g_k)$ is a unimodular vector. Consequently,
\cite{Su76} implies that there exists an invertible
      $n\times n$ matrix
      $\mathbf{B_1}$ with entries in
      $\mathbb{Z}[X]$ and such that
      $\mathbf{B_1}\cdot (\partial_1 g_1,\partial_2 g_1, \dots, \partial _n g_1)^t=(1,0,\dots,0)^t$.
      Hence
      $$
        \mathbf{B_1}\mathcal{J}(g_1,g_2,\dots,g_k)=
        \begin{pmatrix}
          1 & * & \dots & * \\
          0 & * & \dots & * \\
          \vdots & \vdots &  \ddots &  \vdots \\
          0 & * & \dots & *
        \end{pmatrix}
      $$
      It can  be shown that
      $k\times k$ minors of
      $\mathbf{B_1}\mathcal{J}(g_1,g_2,\dots,g_k)$ are linear
      combinations of the
      $k\times k$ minors of
      $\mathcal{J}(g_1,g_2,\dots,g_k)$. So, the ideal  generated by
      $k\times k$ minors of
      $\mathbf{B_1}\mathcal{J}(g_1,g_2,\dots,g_k)$  is  contained in the ideal generated by the
      $k\times k$ minors of
      $\mathcal{J}(g_1,g_2,\dots,g_k)$. Since
      $\mathbf{B_1}$ is invertible,  these ideals coincide.

      Let
      $\mathbf{J_1}$ be obtained from
      $\mathbf{B_1} \mathcal{J}(g_1,g_2,\dots,g_k)$ by
      deletion the first row and the first column.
      Then the set of
      $(k-1)\times (k-1)$ minors of
      $\mathbf{J_1}$ coincides with
      $\mathbb{Z}[X]$. Therefore the first column of
      $\mathbf{J_1}$ is unimodular. Thus there exist
      $(n-1)\times (n-1)$ matrix
      $\mathbf{B'_2}$ such that the  product of
      $\mathbf{B'_2}$ by the first column of
      $\mathbf{J_1}$ is equal to
      the column
      $(1,0,\dots,0)^t$ of the length
      $k-1$. extend
      $\mathbf{B'_2}$ up to an
      $n\times n$ matrix
      $\mathbf{B_2}$ as follows:
      $$
      \mathbf{B_2}=
      \begin{pmatrix}
        1 &
           \begin{matrix}
             0 &  \dots & 0
           \end{matrix}\\
        \begin{matrix}
         0 \\
         \vdots\\
         0
        \end{matrix}&
        \boxed{
        \begin{matrix}
           &  & \\
           & \mathbf{B'_2} & \\
           & &
        \end{matrix}}
      \end{pmatrix}
      $$

      We obtain
      $$
      \mathbf{B_2B_1}\mathcal{J}(g_1,g_2,\dots, g_k)=
      \begin{pmatrix}
        1      & *      & * &  \dots & * \\
        0      & 1      & * &  \dots & * \\
        0      & 0      & * &  \dots & * \\
        \vdots & \vdots & \vdots & \ddots & \vdots \\
        0      & 0      & * &  \dots & *
      \end{pmatrix}
      $$
      And so on. Finally we obtain a matrix of the form
      $$\mathbf{B_kB_{k-1}\dots B_1}\mathcal{J}(g_1,g_2,\dots,g_k)=
        \begin{pmatrix}
          1      & *      & * &  \dots & * \\
          0      & 1      & * &  \dots & * \\
          0      & 0      & 1 &  \dots & * \\
          \vdots & \vdots & \vdots & \ddots & \vdots \\
          0      & 0      & 0 &  \dots & 1\\
          \vdots & \vdots & \vdots & \ddots & \vdots \\
          0      & 0      & 0 &  \dots & 0
        \end{pmatrix}
      $$
      Extend this matrix up to an
      $n\times n$ matrix as follows:      $$
        \mathbf{\widehat{C}}=
        \begin{pmatrix}
        \boxed{
          \begin{array}{c}
            \\
            \\
            \\
            \mathbf{B_k B_{k-1}\dots B_1}\mathcal{J}(g_1,g_2,\dots, g_k)\\
            \\
            \\
            \\
          \end{array}}
          \begin{array}{cccc}
            0 & 0 &\dots & 0 \\
            \vdots& \vdots& \ddots & \vdots \\
            0 & 0 &\dots & 0 \\
            1 & 0 &\dots & 0 \\
            0 & 1 & \dots &  0 \\
            \vdots& \vdots& \ddots & \vdots \\
            0 & 0 &  \dots &  1
          \end{array}
        \end{pmatrix}
      $$
      Clearly,
      $\mathbf{\widetilde{C}}=(\mathbf{B_kB_{k-1}\dots B_1})^{-1}\mathbf{\widehat{C}}$
      is of the form
      $$\mathbf{\widetilde{C}}=
        \left(
        \begin{array}{c|c}
          \mathcal{J}(g_1,g_2,\dots,g_k)
          &
          \mathbf{C'}
        \end{array}
        \right)
      $$
      for some
      $n\times (n-k)$ matrix
      $\mathbf{C'}$.

      It is obvious that
      $\mathbf{\widetilde{C}}$ is invertible. Moreover, let us notice that
      $\overline{g}_j=\sigma_j(\mathbf{\widetilde{C}})$ for
      $j=1,2,\dots,k$.

      Consider the homomorphism
      $\varepsilon:\mathbb{Z}[X]\to \mathbb{Z}$ that takes each
      polynomial in
      $\mathbb{Z}[X]$ to its free term. The ideal
      generated by all
      $k\times k$ minors of
      $\mathcal{J}(g_1,g_2,\dots,g_k)$ coincides with
      $\mathbb{Z}[X]$. Therefore applying
      $\varepsilon$ to the minors of
      $\mathcal{J}(g_1,g_2,\dots,g_k)$  we obtain that the ideal generated by all
      $k\times k$ minors of
      $\mathcal{J}(\overline{g}_1,\overline{g}_2,\dots,\overline{g}_k)$ coincides with
      $\mathbb{Z}$.  It is  well known that in this  case the system
      $\{\overline{g}_1,\overline{g}_2,\dots,\overline{g}_k\}$
      can be included in a set of generators of the free abelian Lie algebra
      with a basis
      $X$. Therefore, this system can be included in a system of generators  of the
      free metabelian Lie algebra
      $M$ as well.

      Let
      $R$ be a subring of
      $\mathbb{Z}[X]$ such that it is generated by
      $\overline{g}_{k+1}, \overline{g}_{k+2},\dots, \overline{g}_n$.
      Consider the elements
      $\sigma_{k+1}(\mathbf{B}^{-1}), \sigma_{k+2}(\mathbf{B}^{-1}), \dots, \sigma_n(\mathbf{B}^{-1})$.
      they can be represented as polynomials in
      $\overline{g}_1, \overline{g}_2,\dots, \overline{g}_n$. Thus, one can find linear combinations
      $\widetilde{g}_{k+1},\widetilde{g}_{k+2},\dots ,\widetilde{g}_{n}$  of the elements
      $\sigma_{1}(\mathbf{\widetilde{C}}), \sigma_{2}(\mathbf{\widetilde{C}}), \dots, \sigma_k(\mathbf{\widetilde{C}})$,
      with coefficients in
      $\mathbb{Z}[X]$
      such that these combinations satisfy following property. If
      we represent
      $\sigma_i(\mathbf{\widetilde{C}})-\widehat{g}_i$
      ($k+1\leqslant i \leqslant n$) as polynomials in
      $\overline{g}_1, \overline{g}_2,\dots, \overline{g}_n$ then these polynomials do not depend on
      $\overline{g}_{1},\overline{g}_{2},\dots, \overline{g}_k$. For all
      $i=k+1,k+2,\dots,n$ subtract the linear combinations corresponding to
      $\widehat{g}_{i}$ from the last
      $i$th columns of
      $\mathbf{\widetilde{C}}$. Let
      $\mathbf{C}$ be the obtained matrix.

      Denote by
      $\mathbf{E}_s$ the
      $s\times s$ identity matrix and by
      $\mathbf{E}_{s,i,j}$ the
      $s\times s$ matrix unit corresponding  to the
      $i$th row and the
      $j$th column. Then we have
      $\mathbf{C}$ is a product of
      $\mathbf{\widetilde{C}}$ and the matrices of the form
      $\mathbf{E}-\widehat{g}_i\mathbf{E}_{i,j}$ for
      $i=1,2,3,\dots, k$ and
      $j=k+1,k+2,\dots, n$. Since the matrices
      $\mathbf{\widetilde{C}}$ and
      $\mathbf{E}-\widehat{g}_i\mathbf{E}_{i,j}$ are invertible  so is
      $\mathbf{C}$. The elements
      $\sigma_1(\mathbf{C})$,
      $\sigma_2(\mathbf{C})$,
      $\dots$,
      $\sigma_n(\mathbf{C})$ generate the same ideal in
      $\mathbb{Z}[X]$ as the elements
      $\overline{g}_1,\overline{g}_2,\dots, \overline{g}_n$ do. At it was shown above, this ideal is
      $\Delta$. Consequently,
      $\sigma_{k+1}(\mathbf{C})$,
      $\sigma_{k+2}(\mathbf{C})$,
      $\dots$,
      $\sigma_n(\mathbf{C})$ generate the same ideal of
      $R$ as
      $\overline{g}_{k+1},\overline{g}_{k+2}, \dots, \overline{g}_n$ do.

      Consider the following chain of the subrings of
      $R$:
      $\mathbb{Z}\subset\mathbb{Z}[\overline{g}_{k+1}]\subset \mathbb{Z}[\overline{g}_{k+1},\overline{g}_{k+2}]
        \subset \dots \subset \mathbb{Z}[\overline{g}_{k+1},\overline{g}_{k+2},\dots \overline{g}_n]=R$.
      Easy to see, that this chain satisfies the conditions of Theorem~%
\ref{art}. Indeed, the first two conditions are obvious (in the
      second condition we take
      $\overline{g}_{k+i}$ for
      $y_i$). The third condition follows from
\cite{Su76}. The fourth condition is also obvious because
      $I_{r}=\langle\overline{g}_{k+1},\overline{g}_{k+2}, \dots, \overline{g}_{k+r}\rangle$, therefore
      $I_r/I_r^2$ is isomorphic to the set of linear combinations of
      $\overline{g}_{k+1},\overline{g}_{k+2}, \dots, \overline{g}_{k+r}$.
      This means that
      $I_r/I_r^2$ is a
      $\mathbb{Z}$-module of rank
      $r$. Consequently, the set of all
      $(n-k)\times(n-k)$-matrices with coefficients in
      $\mathbb{Z}[\overline{g}_{k+1},\overline{g}_{k+2}, \dots,\overline{g}_{n}]$
      acts transitively on the set of all
      $I_{n-k}$-modular vectors. In other words, there exists an invertible matrix
      $\mathbf{D}_1$ (in which the elements
      $\overline{g}_{k+1},\overline{g}_{k+2}, \dots,\overline{g}_{n}$ are written as expressions in
      $x_1,x_2,\dots, x_n$) such that
      $(\sigma_{k+1}(\mathbf{C}),\sigma_{k+2}(\mathbf{C}),\dots, \sigma_n(\mathbf{C}))\mathbf{D}_1=
       (\overline{g}_{k+1},\overline{g}_{k+2},\dots, \overline{g}_n)$.

      Let
      $$\mathbf{D}=
        \left(
        \begin{array}{c|ccc}
          \mathbf{E}_{k} &  0 & \dots & 0\\ \hline
            0  &   &  & \\
            \vdots & & \mathbf{D_1} & \\
            0  &   &  & \\
       \end{array}
       \right).$$
      Then
      $$(x_1,x_2,\dots, x_n)\mathbf{C}\mathbf{D}=(\sigma_1(\mathbf{C}),\sigma_2(\mathbf{C}),\dots, \sigma_n(\mathbf{C}))\mathbf{D}=
        (\overline{g}_1,\overline{g}_2,\dots,\overline{g}_n).$$
      Therefore, we obtain
      $\sigma_{i}(\mathbf{C}\mathbf{D})=\overline{g}_i$ for all
      $i=1,2,\dots, n$. There is a unique representation
      \begin{equation}\label{lin-nonlin}
         \mathbf{C}\mathbf{D}=\mathbf{\overline{F}}+\mathbf{F'},
      \end{equation}
      where
      $\mathbf{\overline{F}}$ is such that its elements are integers and
      $\mathbf{F'}$ is such that its elements are polynomials in the variables from the set
      $X$ and without free terms.

      By computing degrees of polynomials in both sides of
(\ref{lin-nonlin}) one can obtain that
      $\sigma_{i}(\mathbf{\overline{F}})=\overline{g}_i$. Therefore,
      $\sigma_{i}(\mathbf{F'})=0$. Thus,
\cite{Um93} implies that there exist elements
      $g_1',g_2',\dots, g_n'\in M_{\mathbb{Q}}'$ such that
      $\mathbf{CD}=\mathcal{J}(g_1,g_2,\dots, g_n)$, where
      $g_i=\overline{g}_i+g_i'$ for
      $i=1,2,\dots n$
      ($M_{\mathbb{Q}}$ is the free metabelian Lie
      $\mathbb{Q}$-algebra with the set of generators
      $X$).

      Let
      $[u]=[\dots[[x_{i_1}, x_{i_2}]x_{i_3}]\dots x_{i_k}]\in M'$. It is easy to show by induction that
      \begin{equation}\label{diff}
        \partial_{i}[u]=
           \begin{cases}
             x_{i_2}x_{i_3}\dots x_{i_k}, & \text{if } i=i_1 \\
            -x_{i_1}x_{i_3}\dots x_{i_k}, & \text{if } i=i_2 \\
            0,                             & \text{else}.
          \end{cases}
      \end{equation}

      Let us put a linear order on
      $X$. By
\cite{Bo63,Shme64}, the set of all right-normed elements of the
      form
      $[\dots[[x_{i_1}, x_{i_2}]x_{i_3}]\dots x_{i_k}]$ where
      $x_{i_2}<x_{i_1}$,
      $x_{i_2} \leqslant x_{i_3}\leqslant \dots \leqslant x_{i_k}$ is a basis of
      $M$. Later on, such monomials will be called \emph{basis} ones.

      Suppose that
      $h=\alpha[u]$ for some
      $[u]\in M'$ starting with
      $x_i$. It follows from
(\ref{diff}) that
      $\partial_i h=\alpha \partial_i [u]$. Consider distinct Lie monomials
      $[u]$ such that
      $x_i$ is not the least generator in these monomials. Then the non-zero values
      of monomials
      $\partial_i [u]$ are also distinct. Let
      $h$ be an element of
      $M'$ such that
      $h=\sum\alpha_j [u_j]$, where
      $[u_j]\in M'$ and let
      $[u_j]$ start with
      $x_{i_j}$. If we write derivatives
      $\partial_{i} h$ as a linear combinations of
      $\partial_{i} [u_j]$ then we can see that for any
      $j$ the monomial
      $\partial_{i_j}[u_j]$ occurs only once in the sum obtained by taking
      $\partial_{i_j}$ from all Lie monomials in the linear  combination
      $\partial_{i} h$.
      So, the coefficient by
      $\partial_{i_j}[u_j]$ in
      $\partial_{i_j}h$ is equal to
      $\alpha_j$.

      We have shown that all derivatives of
      $g_1',g_2', \dots, g_n'$ are the polynomials with integer coefficients. Represent
      $g_1',g_2', \dots, g_n'$ as linear combinations of basis monomials. By the last paragraph, all coefficients
      by these monomials are integer.

      Since
      $\mathbf{C}$ and
      $\mathbf{D}$ are invertible so is their product. Again by
\cite{Um93} we obtain that the homomorphism
      $\mu:  M\to M$ defined by the rule
      $\mu(x_i)=g_i$ is an automorphism. So,
      $\{g_1,g_2,\dots, g_n\}$ is a set of free generators of
      $M$.

      Conversely, let
      $\{g_1,g_2,\dots,g_k\}$ be primitive system of elements. Then it can be included in a system of free generators
      $\{g_1,g_2,\dots, g_n\}$ of
      $M$. So,
      $\mu: M\to M$ defined by the rule
      $\mu(x_i)=g_i$ is an automorphism. Therefore, Lemma~%
\ref{automorph} implies
      $|\mathcal{J}(g_1,g_2,\dots, g_n)|=\pm 1$. Expanding this determinant
      by the last column we obtain
      $1$  as a linear combination of
      $(n-1)\times (n-1)$ minors constructed  on the first
      $n-1$ rows. Expanding these minors by the last columns we obtain a linear combination of
      $(n-2)\times (n-2)$ we obtain
      $1$  as a linear combination of
      $(n-2)\times (n-2)$ minors constructed  on the first
      $n-2$ rows and so on. Finally, we obtain an equality of the
      form
      $\sum_{i} f_im_i =\pm 1$, where
      $m_i$ are
      $k\times k$ minors constructed  on the first
      $k$ rows and
      $f_i\in \mathbb{Z}[X]$. Multiplying by
      $-1$ if necessary, we see that
      $1$ lies in the ideal of the ring
      $\mathbb{Z}[X]$ that is generated by
      $m_1,m_2,\dots$. Therefore this ideal coincides with
      $\mathbb{Z}[X]$.  This concludes the proof.
    \end{proof}
    \begin{llll}\label{unimod-prim}
      A  system of elements
      $\{g_1,g_2,\dots,g_k\}$
      ($1\leqslant k\leqslant n$) of the free  metabelian Lie ring
      $M$ is primitive iff for any positive integer numbers
      $p,q,m$ such that
      $m\geqslant 2$ the ideal generated by
      $k\times k$ minors of
      $\overline{\mathcal{J}}_{p,q,m}(g_1,g_2,\dots,g_k)$ coincides with
      $\mathbb{Z}_{p,q,m}[X]$.
    \end{llll}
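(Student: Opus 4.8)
The plan is to reduce the statement to the integral criterion already established in Theorem~\ref{equivprimunimod}, which says that the system is primitive if and only if the ideal $D$ generated by the $k\times k$ minors of $\mathcal{J}(g_1,g_2,\dots,g_k)$ equals $\mathbb{Z}[X]$. The bridge to the finite quotients is the elementary observation that the natural homomorphism $\eta\colon\mathbb{Z}[X]\to\mathbb{Z}_{p,q,m}[X]$, being a ring homomorphism, carries each $k\times k$ minor of $\mathcal{J}(g_1,\dots,g_k)$ to the corresponding $k\times k$ minor of $\overline{\mathcal{J}}_{p,q,m}(g_1,\dots,g_k)$: a determinant is a fixed polynomial in the matrix entries, and $\eta$ respects sums and products. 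Consequently the ideal generated by the $k\times k$ minors of $\overline{\mathcal{J}}_{p,q,m}$ is exactly the image $\eta(D)=(D+I_{p,q,m})/I_{p,q,m}$, and this image is all of $\mathbb{Z}_{p,q,m}[X]$ if and only if $D+I_{p,q,m}=\mathbb{Z}[X]$.

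With this reformulation the forward implication is immediate: if the system is primitive then $D=\mathbb{Z}[X]$ by Theorem~\ref{equivprimunimod}, whence $D+I_{p,q,m}=\mathbb{Z}[X]$ for every choice of $p,q,m$, so the minors of $\overline{\mathcal{J}}_{p,q,m}$ generate the whole ring $\mathbb{Z}_{p,q,m}[X]$.

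For the converse I would argue by contraposition. Assume the system is not primitive, so by Theorem~\ref{equivprimunimod} we have $D\neq\mathbb{Z}[X]$ and may fix a maximal ideal $\mathfrak{m}\supseteq D$. The residue field $\mathbb{Z}[X]/\mathfrak{m}$ is a finite field, say of characteristic $c$ and cardinality $c^s$; this is the standard fact that $\mathbb{Z}[x_1,\dots,x_n]$ is a Jacobson ring whose maximal ideals have finite residue fields. I now choose the parameters so that $I_{p,q,m}\subseteq\mathfrak{m}$: take $m=c$, $p=1$, and $q=c^s-1$, which are positive integers with $m\geqslant 2$ since $c\geqslant 2$ and $c^s-1\geqslant 1$. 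Indeed $c\in\mathfrak{m}$ because the residue field has characteristic $c$, and for each $i$ the generator $x_i^{p}(x_i^{q}-1)=x_i^{c^s}-x_i$ vanishes in $\mathbb{Z}[X]/\mathfrak{m}$ since every element $a$ of the field $\mathbb{F}_{c^s}$ satisfies $a^{c^s}=a$. Hence $I_{1,c^s-1,c}\subseteq\mathfrak{m}$, so $D+I_{1,c^s-1,c}\subseteq\mathfrak{m}\subsetneq\mathbb{Z}[X]$, and by the reformulation of the first paragraph the minors of $\overline{\mathcal{J}}_{1,c^s-1,c}$ do not generate the whole ring. This contradicts the hypothesis for $(p,q,m)=(1,c^s-1,c)$ and completes the proof.

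The step I expect to be the crux is locating a single admissible triple $(p,q,m)$ with $I_{p,q,m}\subseteq\mathfrak{m}$; everything rests on the finiteness of the residue field together with the fact that the defining relations $x_i^{p}(x_i^{q}-1)$ are exactly the ones recording that each $x_i$ maps to an element of a finite field (namely $x_i^{c^s}=x_i$). The remaining ingredients, compatibility of minors with $\eta$ and the translation between the condition $D+I_{p,q,m}=\mathbb{Z}[X]$ and the ideal of minors being the unit ideal, are routine.
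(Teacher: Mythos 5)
Your proposal is correct, and the forward direction coincides with the paper's. The converse, however, is reached by a genuinely different key ingredient. The paper also argues by contraposition and also works with the quotient $\mathbb{Z}[X]/D$, but it invokes Baumslag's result that a non-trivial finitely generated commutative ring has a non-trivial \emph{finite} quotient $R=\mathbb{Z}[X]/(D+J)$; it then reads off $p_i,q_i$ from the relations $\breve{x}_i^{p_i}=\breve{x}_i^{p_i+q_i}$ holding in $R$, sets $p=\max p_i$, $q=\mathrm{lcm}(q_i)$, $m$ the additive exponent, and must then verify that the assignment $x_i\mapsto\breve{x}_i$ gives a well-defined surjection $\theta\colon\mathbb{Z}_{p,q,m}[X]\to R$ killing the minors. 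You instead pass to a maximal ideal $\mathfrak{m}\supseteq D$ and use the arithmetic form of the Nullstellensatz (the Jacobson property of $\mathbb{Z}[x_1,\dots,x_n]$): the residue field is a finite field $\mathbb{F}_{c^s}$, and then $a^{c^s}=a$ for all its elements hands you the explicit admissible triple $(p,q,m)=(1,c^s-1,c)$ with $I_{p,q,m}\subseteq\mathfrak{m}$ immediately. What each approach buys: yours is shorter and sharper --- it avoids the auxiliary ideal $J$, the max/lcm bookkeeping, and the well-definedness check for $\theta$, because the field structure does all of that at once; the paper's route rests on a softer finiteness fact (residual finiteness of finitely generated commutative rings, already in its bibliography) and never needs the residue fields of $\mathbb{Z}[X]$ to be finite, only that some finite quotient exists. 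Both arguments are complete; just make sure, if your version were incorporated, that the Jacobson-ring fact is given a citation of the same standing as the paper's reference to Baumslag, since it is the entire load-bearing step of your converse.
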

    \begin{proof}
      If a system of elements
      $\{g_1,g_2,\dots,g_k\}$ is primitive then  by Theorem~%
\ref{equivprimunimod} the ideal generated by
      $k\times k$ minors of
      $\mathcal{J}(g_1,g_2,\dots,g_k)$ coincides with the entire ring
      $\mathbb{Z}[X]$. It is clear that for any ring
      $\mathbb{Z}_{p,q,m}[X]$ the natural homomorphism
      $\mathbb{Z}[X]\to \mathbb{Z}_{p,q,m}[X]$ is surjective.
      Therefore, the ideal generated by
      $k\times k$ minors of
      $\overline{\mathcal{J}}_{p,q,m}(g_1,g_2,\dots,g_k)$ coincides with
      $\mathbb{Z}_{p,q,m}[X]$.

      Conversely, let the system of elements
      $\{g_1,g_2,\dots, g_k\}$ be not primitive. Then, by Theorem~%
\ref{equivprimunimod} the ideal generated by
      $k\times k$ minors of
      $\mathcal{J}_{p,q,m}(g_1,g_2,\dots,g_k)$ does not coincide
      with
      $\mathbb{Z}[X]$. Therefore,
      $\mathbb{Z}[X]/I$ is non-trivial. Obviously, this ring is finitely generated. So,
\cite{Ba71} implies that there exists an ideal
      $J$ such that
      $R=\mathbb{Z}[X]/I+J$ is a non-trivial finite ring. Consider the images
      $\breve{x}_1, \breve{x}_2,\dots, \breve{x}_n$ of the elements
      $x_1,x_2,\dots, x_n$ under the natural homomorphism
      $\mathbb{Z}[X]\to R$. Since the ring
      $R$ is finite, for any
      $\breve{x}_i$ there are integers
      $p_i,q_i>0$ such that
      $\breve{x}_i^{p_i}=\breve{x}_i^{p_i+q_i}$. Take
      $p_i$ to be least possible and for each such
      $p_i$ take
      $q_i$ to be least possible. Moreover, there exists a
      positive integer number
      $m\geqslant 2$ such that
      $m=0$ in
      $R$. Take
      $m$ to be also least possible. Let
      $p=\max\{p_1,p_2,\dots, p_n\}$ and
      $q$ be the least common multiple of
      $q_1,q_2, \dots, q_n$.

      It is clear that the map
      $X \to R$ defined by the rule
      $x_i\to \breve{x}_i$ can be extended up to the homomorphism
      $\theta: \mathbb{Z}_{p,q,m}[X] \to R$. Indeed, since
      $\mathbb{Z}[X]$ is the free associative commutative
      $n$-generated ring and
      $\mathbb{Z}_{p,q,m}[X] = \mathbb{Z}[X]/I_{p,q,m}$ we are only left to show that
      $\theta$ is well-defined. It means that the relations generating
      $I_{p,q,m}$ go to zero by the action of
      $\theta$. Let us  verify this for all relations  generating
      $I_{p,q,m}$. We have
      $\theta(m)=m=0$ by the choice of
      $m$. Since
      $q$ is the least common multiple of
      $q_1,q_2,\dots q_n$ it divides all
      $q_i$.  Therefore,
      $q=q_i\widetilde{q}_i$ for some positive integer number
      $\widetilde{q}_i$
      ($i=1,2,\dots, n$). Consequently,
      \begin{eqnarray*}
         \theta(x_i^p(x_i^q-1)) & = & \breve{x}_i^p(\breve{x}_i^q-1)= \\
              & = & \breve{x}_i^{p-p_i}\breve{x}_i^{p_i}(\breve{x}_i^{q_i\widetilde{q}_i}-1)= \\
              & = & \breve{x}_i^{p-p_i}(\breve{x}_i^{q_i(\widetilde{q}_i-1)}+\breve{x}_i^{q_i(\widetilde{q}_i-2)}+\dots +\breve{x}_i^{q_i}+1))
                (\breve{x}_i^{p_i}(\breve{x}_i^{q_i}-1))=\\
              & = & 0
     \end{eqnarray*}

     Thus,
     $\theta$ is a homomorphism and it is clearly surjective. The natural homomorphism
     from
     $\mathbb{Z}[X]$ to
     $\mathbb{Z}_{p,q,m}[X]$ takes
     $k\times k$ minors of
     $\mathcal{J}(g_1,g_2,\dots,g_k)$ to the corresponding
     $k\times k$ minors of
     $\overline{\mathcal{J}}(g_1,g_2,\dots,g_k)$. Moreover, the ideal generated by
     these elements does not coincide with the entire ring
     $\mathbb{Z}_{p,q,m}[X]$. Indeed, on the one hand,
     $\theta$ is a surjective  homomorphism, but on the other hand, all
     $k\times k$ minors of
     $\overline{\mathcal{J}}(g_1,g_2,\dots,g_k)$ are in the kernel of
     $\theta$. Therefore, if the ideal generated by
     $k\times k$ minors of
     $\overline{\mathcal{J}}(g_1,g_2,\dots,g_k)$ coincided with
     $R$ then
     $R$ would be the trivial ring. We get a contradiction.
   \end{proof}

   \begin{llll}\label{abrprimmeas}
     If  the system of  elements
     $\{g_1,g_2,\dots, g_k\}$
     ($1\leqslant k\leqslant n$) of the free metabelian Lie  ring
     $M$ is uniformly distributed on the variety of metabelian Lie
     rings then the natural homomorphism from
     $M$ to
     $M/M'$ takes the system
     $\{g_1,g_2,\dots, g_k\}$ to a primitive system of elements on the variety of free abelian Lie
     rings.
   \end{llll}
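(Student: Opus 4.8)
The plan is to restrict attention to finite abelian groups regarded as Lie rings with the zero bracket, and to show that on such rings the hypothesis of uniform distribution is exactly the measure-preserving hypothesis of Lemma~\ref{abgrlem} applied to the abelianized system. Recall that the image of $g_j$ under the projection $M\to M/M'$ is its linear part $\overline{g}_j$, where $g_j=\overline{g}_j+g_j'$ with $g_j'\in M'$.

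First I would fix an arbitrary finite abelian group $A$ and turn it into a Lie ring by declaring $[a,b]=0$ for all $a,b\in A$. Its derived ring is $\{0\}$, so $A$ is a finite metabelian Lie ring, and therefore $\{g_1,g_2,\dots,g_k\}$ is uniformly distributed on $A$ by hypothesis. The key observation is that Lie-ring evaluation on $A$ erases everything but the linear part: every bracketed monomial occurring in $g_j'$ has length at least $2$, hence evaluates to $0$ when the multiplication is trivial, so $g_j(r^n)=\overline{g}_j(r^n)$ for every $r^n\in A^n$. Thus $\psi_{g_1,g_2,\dots,g_k}$ on $A$ coincides with the evaluation map of the linear forms $\overline{g}_1,\overline{g}_2,\dots,\overline{g}_k$, and the uniform-distribution condition (each $p^k\in A^k$ has exactly $|A|^{n-k}$ preimages) becomes precisely the measure-preserving condition for $\{\overline{g}_1,\overline{g}_2,\dots,\overline{g}_k\}$ on the abelian group $A$.

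Since $A$ was an arbitrary finite abelian group, the system $\{\overline{g}_1,\overline{g}_2,\dots,\overline{g}_k\}$ preserves measure on the whole variety of abelian groups, so by Lemma~\ref{abgrlem} it is primitive in the free abelian group of rank $n$; that is, it can be completed to a $\mathbb{Z}$-basis. Because the Lie structure on $M/M'$ is trivial, a system of free generators of the free abelian Lie ring $M/M'$ is the same thing as a module basis, so this completion is exactly a completion to a set of free generators of $M/M'$. Hence $\{\overline{g}_1,\overline{g}_2,\dots,\overline{g}_k\}$ is primitive on the variety of abelian Lie rings. The only step that needs to be checked carefully is the collapse of Lie-ring evaluation to the linear part on a trivial-bracket ring, since this is what identifies the two notions of uniform distribution; once that identification is in place, the conclusion is a direct application of Lemma~\ref{abgrlem}.
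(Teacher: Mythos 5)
Your proposal is correct and follows essentially the same route as the paper: restrict to finite abelian groups with the zero bracket, observe that evaluation kills $g_j'$ so that $\psi_{g_1,\dots,g_k}$ reduces to the evaluation of the linear parts $\overline{g}_1,\dots,\overline{g}_k$, and then invoke Lemma~\ref{abgrlem} together with the identification of free generators of $M/M'$ with a $\mathbb{Z}$-basis. The only difference is that the paper first proves separately (by contradiction) that no $g_i$ lies in $M'$, a step your streamlined argument correctly subsumes.
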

   \begin{proof}
     Suppose that
     $g_i\in M'$. Let
     $A$ be an arbitrary finite abelian Lie ring. Let us choose a basis
     of
     $M$ such that all elements  in this basis are Lie monomials. Then represent
     $g_i$ as an integer valued linear combination of the elements of this basis. Obviously, any monomial in
     this  linear combination is a product of at least two generators of
     $M$. Therefore,
     $g_i(r_1,r_2,\dots,r_n)=0$ in
     $A$ for any
     $r_1,r_2,\dots r_n\in A$. But
     $g_i(x_1,x_2,\dots, x_n)$ is uniformly distributed on the variety of metabelian Lie
     rings. In particular, it is uniformly distributed on any
     finite abelian Lie ring. This is a contradiction. So,
     $g_i \in M\backslash M'$ for any
     $k=1,2,\dots,k$.

     Arguing as in the last paragraph we see that
     $g_i'(r_1,r_2,\dots,r_n)=0$ for any
     $r_1,r_2,\dots, r_n\in A$. Consequently, for any
     $r_1,r_2,\dots, r_n\in A$ we have
     $g_i(r_1,r_2,\dots,r_n)=\overline{g}_i(r_1,r_2, \dots r_n)$.
     Therefore, the system of elements
     $\{\overline{g}_1,\overline{g}_2,\dots,\overline{g}_k\}$ is uniformly distributed on
     $A$.

     Note that the image of
     $g_i$ by the natural homomorphism
     $M\to M/M'$ is equal to
     $\overline{g}_i$. Since we can consider an arbitrary the finite abelian Lie ring
     $A$ we obtain the system
     $\{\overline{g}_1,\overline{g}_2,\dots,\overline{g}_k\}$ is uniformly distributed
     on the variety of  abelian Lie ring. Since the multiplication on
     abelian Lie rings is trivial, we can consider an abelian Lie
     ring  as an additive abelian group. Finally,  Lemma~%
\ref{abgrlem} implies that since the system of elements
     $\{\overline{g}_1,\overline{g}_2,\dots,\overline{g}_k\}$  is uniformly distributed on the variety of
     abelian groups,  this system is primitive in the free abelian group
     $G$ generated by the set
     $X$. It means that this element is primitive
     in the free abelian Lie ring that is obtained from
     $G$ by adding on the multiplication in
     the trivial way. This is exactly what we needed.
   \end{proof}

   We need one more well known auxiliary lemma. We are going to give its proof for the completeness of reasoning.
   \begin{llll} \label{matrixsubst}
     Let
     $$
     S_1=
     \begin{pmatrix}
       s_1   & 0\\
       \tau_1& 0
     \end{pmatrix}, \
     S_2=
     \begin{pmatrix}
       s_2    & 0 \\
       \tau_2 & 0
     \end{pmatrix}, \dots,
     S_n=
     \begin{pmatrix}
       s_n      & 0\\
        \tau_n  & 0
     \end{pmatrix}
     $$
     be elements of the Lie ring
     $\mathcal{M}_{p,q,m}$. Then
     \begin{equation} \label{matprod}
       g(S_1,S_2,\dots, S_n)=
       \begin{pmatrix}
          \overline{g}(s_1,s_2\dots, s_n) & 0\\
          \tau'                 & 0
       \end{pmatrix},
      \end{equation}
      where
      $\tau'=\sum_{i=1}^n \tau_i\cdot \opartial_i g(s_1,s_2,\dots,s_n)$.
   \end{llll}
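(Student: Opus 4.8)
The plan is to prove (\ref{matprod}) by induction on the length $\ell(g)$ of a Lie monomial, after first reducing to this case by linearity. Both sides of (\ref{matprod}) are additive in $g$: the substitution $g\mapsto g(S_1,\dots,S_n)$ is a Lie-ring homomorphism, the linear part $\overline{g}$ is additive, and each $\opartial_i g$ is additive because the partial derivatives are derivations. Hence it suffices to check the formula when $g$ is a single Lie monomial. The underlying computational simplification is that every matrix of $\mathcal{M}_{p,q,m}$ has zero second column, so products only ever involve the first column; concretely,
\[
\begin{pmatrix}\overline{g}_1(s) & 0\\ \tau_1' & 0\end{pmatrix}\begin{pmatrix}\overline{g}_2(s) & 0\\ \tau_2' & 0\end{pmatrix}=\begin{pmatrix}\overline{g}_1(s)\,\overline{g}_2(s) & 0\\ \tau_1'\,\overline{g}_2(s) & 0\end{pmatrix}.
\]

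For the base case $g=x_i$ one has $g(S_1,\dots,S_n)=S_i$; here $\overline{g}=x_i$ gives top-left entry $s_i$, and since $\opartial_j x_i=\delta_{ij}$ the claimed $\tau'=\sum_j\tau_j\,\opartial_j x_i(s)$ equals $\tau_i$, so both entries match $S_i$. For the inductive step write $g=[g_1,g_2]$ with $g_1,g_2$ shorter monomials satisfying the formula. Using the product displayed above together with the commutativity of $\mathbb{Z}_{p,q,m}[X]$, the Lie bracket collapses to
\[
[g_1(S),g_2(S)]=\begin{pmatrix}0 & 0\\ \tau_1'\,\overline{g}_2(s)-\tau_2'\,\overline{g}_1(s) & 0\end{pmatrix},\qquad \tau_i'=\sum_j\tau_j\,\opartial_j g_i(s).
\]
The top-left entry is $0=\overline{g}(s)$, which is forced since $g=[g_1,g_2]\in M'$ has zero linear part.

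It then remains to recognize the bottom-left entry as $\sum_j\tau_j\,\opartial_j g(s)$, and this is the only step requiring real bookkeeping. It rests on the product rule for the derivatives: from $\partial_i'[f,g]=(\partial_i'f)\omega(g)-(\partial_i'g)\omega(f)$ and the fact that $\varphi'\circ\omega$ annihilates all brackets and so sends a Lie element to its linear part, one obtains $\partial_i[g_1,g_2]=(\partial_i g_1)\overline{g}_2-(\partial_i g_2)\overline{g}_1$; applying the homomorphism $\eta$ gives $\opartial_i[g_1,g_2]=(\opartial_i g_1)\overline{g}_2-(\opartial_i g_2)\overline{g}_1$ in $\mathbb{Z}_{p,q,m}[X]$. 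Substituting $s_1,\dots,s_n$, multiplying by $\tau_j$, and summing over $j$ turns $\tau_1'\,\overline{g}_2(s)-\tau_2'\,\overline{g}_1(s)$ into exactly $\sum_j\tau_j\,\opartial_j g(s)$, which closes the induction. The matrix arithmetic is immediate; the one place to be careful is matching the $\tau'$-entry against the derivation identity, i.e.\ checking that the ``$\overline{g}_2$ times the derivative of $g_1$ minus $\overline{g}_1$ times the derivative of $g_2$'' pattern produced by the bracket is precisely the Leibniz rule for $\opartial_i$ on $[g_1,g_2]$.
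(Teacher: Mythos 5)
Your proof is correct, and it follows the same overall strategy as the paper: reduce to Lie monomials by linearity, then induct on the length of the monomial, using the $2\times 2$ matrix arithmetic together with the derivation property of the partial derivatives. The one real difference is the decomposition in the inductive step. The paper first invokes the right-normed normal form to assume $g=[[u],x_j]$ with $x_j$ a generator, so that only the special case
$\opartial_i[[u],x_j]=\opartial_i[u]\cdot x_j-\delta_{ij}\overline{[u]}$
of the Leibniz rule is needed; you instead split an arbitrary monomial as $g=[g_1,g_2]$ and apply the induction hypothesis to both factors, which requires the symmetric rule
$\opartial_i[g_1,g_2]=(\opartial_i g_1)\overline{g}_2-(\opartial_i g_2)\overline{g}_1$,
and your derivation of it (from $\partial'_i[f,g]=(\partial'_i f)\omega(g)-(\partial'_i g)\omega(f)$ plus the fact that $\varphi'\circ\omega$ annihilates $L'$ and so extracts the linear part) is sound. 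Your version buys independence from the right-normed spanning fact --- no ``without loss of generality'' reduction is needed --- at the cost of verifying the slightly more general Leibniz identity; the paper keeps the derivative computation minimal but must justify the normalization. Both arguments close the induction with the same bookkeeping, namely identifying $\tau_1'\,\overline{g}_2(s)-\tau_2'\,\overline{g}_1(s)$ with $\sum_j \tau_j\,\opartial_j g(s)$, and both handle general Lie polynomials by additivity at the end (you) or as a closing remark (the paper).
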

   \begin{proof}
      Let
      $g=[v]$. Without loss of generality we may assume that
      $[v]$ is a right-normed monomial, i.e. a monomial of the form
      $[\dots[x_{i_1},x_{i_2}],\dots, x_{i_k}]$. If
      $\ell([v])=1$ then the proof is trivial. Let
      $\ell([v])\geqslant 2$. Suppose that the statement is true for any monomials of lesser length.  We have
      $[v]=[[u],x_j]$. By induction hypothesis
      $$[u](S_1,S_2,\dots, S_n)=
      \begin{pmatrix}
         \overline{[u]}(s_1,s_2\dots, s_n) & 0\\
         \tau'_{[u]}              & 0
      \end{pmatrix},$$
      where
      \begin{equation} \label{tau}
        \tau'_{[u]}=\sum_{i=1}^n \tau_i\cdot \opartial_i [u](s_1,s_2,\dots,s_n).
      \end{equation}
      Let us notice that
      $$\overline{[u]}(s_1,s_2\dots, s_n)=
       \begin{cases}
         s_i, & \text{if } [u]=x_i,\ i=1,2,\dots n \\
         0,   & \text{if } \ell([u])\geqslant 2
       \end{cases}.$$
      For any
      $\opartial_i$ the following equality holds:
      \begin{equation}\label{difprod}
        \begin{split}
          \opartial_i[v]=& \opartial_i[[u],x_j]= \\
                        =& \eta\circ\varphi'(\partial'_i[u]\cdot x_j-\partial'_i x_j\cdot[u])=\\
                        =& \eta(\partial_i[u]\cdot x_j-\partial_i x_j\overline{[u]})=\\
                        =& \opartial_i[u]\cdot x_j-\delta_{ij}\overline{[u]}
        \end{split}
      \end{equation}
      where
      $\delta_{ij}$ is Kronecker delta.
      On the other hand,
      \begin{equation} \label{prodoftwomat}
        \left[
        \begin{pmatrix}
          \overline{[u]}(s_1,s_2,\dots,s_n) & 0 \\
          \tau'_{[u]}            & 0
        \end{pmatrix},
        \begin{pmatrix}
          s_j      & 0 \\
          \tau'_{j} & 0
        \end{pmatrix}
        \right]=
        \begin{pmatrix}
          0 & 0 \\
          \tau'_{[u]} s_j-\tau_j \overline{[u]}(s_1,s_2,\dots,s_n)& 0
        \end{pmatrix}.
      \end{equation}
      By
(\ref{matprod}),
 (\ref{tau}) and
(\ref{prodoftwomat}) we get
      $$\tau'=\biggl(\sum_{i=1}^n \tau_i\cdot \opartial_i
       [u](s_1,s_2,\dots,s_n))s_j-\tau_j\overline{[u]}(s_1,s_2,\dots, s_n\biggr).$$
      Taking into account
(\ref{difprod}), we obtain
      $\tau'=\sum_{i=1}^n \tau_i\cdot \opartial_i [u](s_1,s_2,\dots,s_n)$.
      So, for the case of a Lie monomial the proof is complete.
      If
      $g$ is a Lie polynomial, then the statement follows from the linearity of
      $\opartial_i$ and the linearity of the matrix addition.
   \end{proof}
   \begin{ttt}\label{main}
     A system of elements
     $\{g_1,g_2,\dots, g_k\}$
     $(1\leqslant k \leqslant n)$ of the free metabelian Lie algebra
     $M$ is primitive iff it is uniformly distributed on the variety of metabelian Lie algebras.
   \end{ttt}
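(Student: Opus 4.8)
The plan is to prove the two implications separately, using the primitivity criterion of Lemma~\ref{unimod-prim} and the substitution formula of Lemma~\ref{matrixsubst}; no new machinery beyond these is needed. The implication ``primitive $\Rightarrow$ uniformly distributed'' is the soft one. If $\{g_1,\dots,g_k\}$ is primitive I would extend it to a free generating set $\{g_1,\dots,g_n\}$, so that $\mu\colon x_i\mapsto g_i$ is an automorphism of $M$. For any finite metabelian Lie ring $R$ the identification of a homomorphism with the images of the generators gives $\mathrm{Hom}(M,R)\cong R^n$, and precomposition with $\mu$ becomes the evaluation map $r^n\mapsto(g_1(r^n),\dots,g_n(r^n))$; since $\mu$ is invertible, this map $R^n\to R^n$ is a bijection, exactly as in the bijectivity argument already run for $\mu$ in the Preliminaries. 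Composing with the projection $R^n\to R^k$ onto the first $k$ coordinates (all of whose fibres have size $|R|^{n-k}$) shows $\psi_{g_1,\dots,g_k}$ has all fibres of size $|R|^{n-k}$, so the system is uniformly distributed on $R$, hence on the whole variety.

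For the converse I would take the finite rings $\mathcal{M}_{p,q,m}$ as test objects and reduce, by Lemma~\ref{unimod-prim}, to proving that for every admissible $p,q,m$ the $k\times k$ minors of $\overline{\mathcal{J}}_{p,q,m}(g_1,\dots,g_k)$ generate $R:=\mathbb{Z}_{p,q,m}[X]$; write $N=|R|$. A point of $(\mathcal{M}_{p,q,m})^n$ is a pair $(s^n,\tau^n)$ with $s_i\in R$ and $\tau_i\in T_{p,q,m}\cong R^{\,n}$, and by Lemma~\ref{matrixsubst} its image under $\psi_{g_1,\dots,g_k}$ has $j$th entry with diagonal part $\overline{g}_j(s^n)$ and module part $\sum_i\tau_i\,\opartial_ig_j(s^n)$. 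Thus the fibre over a target with diagonal data $\overline{p}\in R^k$ and module data $\rho\in(T_{p,q,m})^k$ splits: the $s^n$ solve the linear system $\overline{g}_j(s^n)=\overline{p}_j$, and for each admissible $s^n$ the $\tau^n$ solve $(\tau_1,\dots,\tau_n)\,\overline{\mathcal{J}}_{p,q,m}(g)\big|_{x\to s^n}=\rho$. Writing $\Phi_{s^n}\colon R^n\to R^k$ for $c\mapsto c\,\overline{\mathcal{J}}_{p,q,m}(g)|_{x\to s^n}$, and using that $T_{p,q,m}$ is free of rank $n$ so that this module system decouples into $n$ independent copies of $\Phi_{s^n}$, the number of admissible $\tau^n$ equals $|\Ker\Phi_{s^n}|^{\,n}$ when each of the $n$ components of $\rho$ (relative to the basis $t_1,\dots,t_n$) lies in $\mathrm{Im}\,\Phi_{s^n}$, and is $0$ otherwise.

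The crux is to extract surjectivity of the generic map $\Phi_{x}$ (the substitution $s_i=x_i$, which returns $\overline{\mathcal{J}}_{p,q,m}(g)$ unchanged) from uniform distribution. I would fix the diagonal target $\overline{p}=(\overline{g}_1,\dots,\overline{g}_k)$, so the admissible set $C$ of $s^n$ contains the generic point $x^n=(x_1,\dots,x_n)$. For this fixed $\overline{p}$ the fibre size is $\sum_{s^n\in C}|\Ker\Phi_{s^n}|^{\,n}$ restricted to those $s^n$ for which all components of $\rho$ lie in $\mathrm{Im}\,\Phi_{s^n}$, and uniform distribution pins this to the single value $|\mathcal{M}_{p,q,m}|^{\,n-k}$ for every $\rho$. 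Taking $\rho=0$ makes every component condition hold, so that common value equals the full sum $\sum_{s^n\in C}|\Ker\Phi_{s^n}|^{\,n}$; since every summand is strictly positive, dropping even one strictly decreases the sum, so equality for all $\rho$ forces every component of every $\rho$ to lie in $\mathrm{Im}\,\Phi_{s^n}$ for all $s^n\in C$. Hence $\Phi_{s^n}$ is onto $R^k$ for all $s^n\in C$, in particular $\Phi_{x}$ is surjective. Surjectivity of $c\mapsto c\,\overline{\mathcal{J}}_{p,q,m}(g)$ is precisely the statement that the ideal of $k\times k$ minors of $\overline{\mathcal{J}}_{p,q,m}(g)$ is all of $R$; as $p,q,m$ were arbitrary, Lemma~\ref{unimod-prim} delivers primitivity. (The diagonal primitivity guaranteed by Lemma~\ref{abrprimmeas} is consistent with this and could be used to describe $C$, but the module computation already subsumes it.)

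I expect the main obstacle to be the bookkeeping of the module part: one must verify that freeness of $T_{p,q,m}$ of rank $n$ genuinely decouples the conditions on $\tau^n$ into $n$ independent instances of the single map $\Phi_{s^n}$, since this is what yields the clean exponent $|\Ker\Phi_{s^n}|^{\,n}$ and makes the extremal comparison at $\rho=0$ work. The only other point needing care is the standard equivalence between surjectivity of an $n\times k$ matrix map over a commutative ring and its maximal minors generating the unit ideal; the admissibility of the substitution $s_i=x_i$ (a valid element of $\mathcal{M}_{p,q,m}$) and the reduction to the rings $\mathcal{M}_{p,q,m}$ via Lemma~\ref{unimod-prim} are then routine.
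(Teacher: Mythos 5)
Your proof is correct, and its skeleton matches the paper's: the easy direction via the bijection induced by an automorphism extending the system, and the hard direction via the finite test rings $\mathcal{M}_{p,q,m}$, the substitution formula of Lemma~\ref{matrixsubst}, a fibre count, a matrix identity $\mathbf{B}\,\overline{\mathcal{J}}_{p,q,m}(g_1,\dots,g_k)=\mathbf{E}_k$ obtained from surjectivity, and finally Lemma~\ref{unimod-prim}. The genuine difference is how you extract surjectivity from uniform distribution. The paper first invokes Lemma~\ref{abrprimmeas} (uniform distribution forces the abelianized system to be primitive), normalizes $\overline{g}_i=x_i$ by a change of generators, and thereby computes the diagonal solution set exactly: it consists of the $|\mathbb{Z}^1_m[X]|^{n-k}$ tuples $(x_1,\dots,x_k,s_{k+1},\dots,s_n)$; then the kernel lower bound $|\Ker\,\xi_{s^n}|\geqslant |T|^{n-k}$ together with the prescribed fibre size $|\mathcal{M}_{p,q,m}|^{n-k}$ forces every kernel to be exactly minimal, hence every $\xi_{s^n}$ surjective. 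You instead fix the diagonal target $(\overline{g}_1,\dots,\overline{g}_k)$, which puts the generic point $x^n$ into the admissible set $C$ with no normalization at all, and compare the fibre over arbitrary module data $\rho$ with the fibre over $\rho=0$: since every summand $|\Ker\,\Phi_{s^n}|^n$ is strictly positive, equality of the two counts forces every admissibility condition to hold, so every $\Phi_{s^n}$ with $s^n\in C$ is surjective, in particular the generic one. This extremal comparison needs neither the exact cardinality of $C$ nor the kernel lower bound, and it renders Lemma~\ref{abrprimmeas} (and through it the abelian-group input of Lemma~\ref{abgrlem}) logically unnecessary for the theorem --- a real simplification of the paper's structure, at the modest price of your slightly more elaborate bookkeeping ($T_{p,q,m}$ free of rank $n$, so the module system decouples into $n$ independent copies of $\Phi_{s^n}$ and $|\Ker\,\xi_{s^n}|=|\Ker\,\Phi_{s^n}|^n$; this is correct and is just the componentwise form of the paper's single map $\xi_{s^n}\colon T^n\to T^k$). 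One minor inaccuracy: the diagonal entries of elements of $\mathcal{M}_{p,q,m}$ should range over the linear polynomials $\mathbb{Z}^1_m[X]$, not over all of $R=\mathbb{Z}_{p,q,m}[X]$ as you write; this is harmless for your argument, since your diagonal targets $\overline{g}_j$ and your substitution $s_i=x_i$ are linear, but it does affect the identity $|\mathcal{M}_{p,q,m}|=|\mathbb{Z}^1_m[X]|\cdot|T_{p,q,m}|$ that any counting here implicitly rests on.
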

   \begin{proof}
      Let a system of elements
      $\{g_1,g_2,\dots, g_k\}$ be primitive. Then it can be included in a set of generators of
      $M$. As we have shown, the property of a system of elements to be uniformly distributed on the variety of
      metabelian Lie algebras does not depend on a set of
      free generators chosen in
      $M$. Therefore, we may assume that
      $\{g_1,g_2,\dots,g_k\}\subseteq X$, for instance,
      $g_i=x_i$ for
      $i=1,2,\dots,k$. Let
      $R$ be an arbitrary finite metabelian Lie ring. Then for any elements
      $r_1,r_2,\dots, r_n$ we have
      $g_i(r_1,r_2,\dots, r_n)=r_i$. This means that for any
      $s_1,s_2,\dots, s_k$ the values of the polynomials
      $g_i$ are equal to the corresponding
      $s_i$ for
      $|R|^{n-k}$
      $n$-tuples
      $(r_1,r_2, \dots ,r_n)\in R^n$. Therefore, the system
      $\{g_1,g_2,\dots, g_k\}$ of elements in
      $M$ is uniformly distributed on
      $R$. Since finite metabelian Lie ring
      $R$ has been chosen arbitrarily we obtain that
      $\{g_1,g_2,\dots,g_k\}$ is uniformly distributed on the variety of metabelian Lie rings.

      Conversely let a system
      $\{g_1,g_2,\dots,g_k\}$ be uniformly distributed on the variety of metabelian Lie rings. Remind that
      the elements
      $\overline{g}_1,\overline{g}_2,\dots,\overline{g}_k$ are the images
      of the corresponding
      $g_1,g_2,\dots,g_k$ by the natural homomorphism
      $M\to M/M'$. By Lemma~%
\ref{abrprimmeas} the system
      $\{\overline{g}_1,\overline{g}_2,\dots,\overline{g}_k\}$ is primitive in
      $M/M'$. Easy to show, that in abelian Lie ring a linear basis is a set of generators.
      Therefore, without loss of generality it can be assumed that
      $\overline{g}_i\in X$ for
      $i=1,2,\dots,k$ (say,
      $\overline{g}_i=x_i$).

      Consider a ring
      $\mathcal{M}_{p,q,m}$. Lemma~%
\ref{matrixsubst} implies that for arbitrary elements
      $$
      S_1=
      \begin{pmatrix}
        s_1   & 0\\
        \tau_1& 0
      \end{pmatrix}, \
      S_2=
      \begin{pmatrix}
        s_2    & 0 \\
        \tau_2 & 0
      \end{pmatrix}, \dots,
      S_n=
      \begin{pmatrix}
        s_n      & 0\\
         \tau_n  & 0
      \end{pmatrix}
      $$
      of this ring the following equality holds:
      \begin{equation*} \label{matprod}
       g_i(S_1,S_2,\dots, S_n)=
       \begin{pmatrix}
          \overline{g}_i(s_1,s_2\dots, s_n) & 0\\
          \tau'_i & 0
       \end{pmatrix},
      \end{equation*}
      where
      $\tau_i'=\sum_{j=1}^n \tau_{j}\cdot \opartial_j g_i(s_1,s_2,\dots, s_n)$.
      Since the system
      $\{g_1,g_2,\dots,g_k\}$ is uniformly distributed on the variety of metabelian Lie algebras it is uniformly distributed on
      any ring, in particular, on
      $\mathcal{M}_{p,q,m}$. Consequently, for any system of matrices
      $(Q_1,Q_2,\dots,Q_k)$, where
      $Q_i=
       \begin{pmatrix}
         a_i            & 0 \\
         \widetilde{\tau}_i & 0
       \end{pmatrix}$, the system of equations
      $$
      \begin{cases}
         g_1(S_1,S_2,\dots, S_n)=Q_1,\\
         g_2(S_1,S_2,\dots, S_n)=Q_2,\\
         \dots\dots\dots\dots\dots\\
         g_k(S_1,S_2,\dots, S_n)=Q_k\\
      \end{cases}$$
      has exactly
      $|\mathcal{M}_{p,q,m}|^{n-k}$ solutions. Therefore the system of equations
      \begin{equation}\label{system}
        \begin{cases}
          \overline{g}_1(s_1,s_2\dots, s_n)=a_1, \\
          \overline{g}_2(s_1,s_2\dots, s_n)=a_2, \\
          \dots\dots\dots\dots\dots\\
          \overline{g}_k(s_1,s_2\dots, s_n)=a_k, \\
          \displaystyle{\sum_{j=1}^n \tau_{j}\cdot \opartial_j g_1(s_1,s_2,\dots, s_n)}=\widetilde{\tau}_1,\\
          \displaystyle{\sum_{j=1}^n \tau_{j}\cdot \opartial_j g_2(s_1,s_2,\dots, s_n)}=\widetilde{\tau}_2,\\
          \dots\dots\dots\dots\dots\dots\dots\dots\dots\\
          \displaystyle{\sum_{j=1}^n \tau_{j}\cdot \opartial_j g_k(s_1,s_2,\dots, s_n)}=\widetilde{\tau}_k.\\
        \end{cases}
      \end{equation}
      has
      $|\mathcal{M}_{p,q,m}|^{n-k}$ solutions.
      Here we say that the solution of system
(\ref{system}) is  a vector
      $$(s_1,s_2,\dots s_n,\tau_1,\tau_2,\dots, ,\tau_n)\in \mathbb{Z}_m^1[X]^n\times T^n$$
      satisfying all equations of this system.

      For any elements
      $s_1,s_2,\dots, s_n\in \mathbb{Z}^1_m[X]$ consider the map
      $\xi_{s_1,s_2,\dots,s_n}:T^n \to T^k$ defined by the rule
      \begin{equation*}
        \begin{split}
          \xi_{s_1,s_2,\dots,s_n}&(\tau_1,\tau_2,\dots, \tau_n)=\\
           =&\left(\sum_{j=1}^n \tau_j\cdot \opartial_i g_1(s_1,s_2,\dots,s_n),\dots,
              \sum_{j=1}^n \tau_j\cdot \opartial_i g_k(s_1,s_2,\dots, s_n)\right).
        \end{split}
      \end{equation*}
      Clearly, this map is a homomorphism of
      $\mathbb{Z}_{p,q,m}[X]$-modules. Therefore, cardinality of
      $\Ker\,\xi_{s_1,s_2,\dots,s_n}$ is not less than
      $|T|^{n-k}$. Moreover, if it is equal to
      $|T|^{n-k}$, then
      $\xi_{s_1,s_2,\dots,s_n}$ is a surjective homomorphism.
      Thus, for any
      $s_1,s_2,\dots,s_n$ and for any
      $(\widetilde{\tau}_1,\widetilde{\tau}_2,\dots,\widetilde{\tau}_k)$ in the image  of
      $\xi_{s_1,s_2,\dots,s_n}$ the system of the last
      $k$ equations of
(\ref{system}) has at least
      $|T|^{n-k}$ solutions
      $(\tau_1,\tau_2,\dots \tau_n)$.

      Consider the system of equations consisting of the first
      $k$ equations of
(\ref{system}) in the  case
      $a_i=x_i$ for
      $i=1,2,\dots, k$. Since
      $g_i=x_i+g_i'$, for
      $i=1,2,\dots,k$ the set of all solutions of this system
      consists of
      $n$-tuples of the form
      $(x_1,x_2,\dots,x_k,s_{k+1},\dots,s_n)$.
      So, cardinality of the set of
      $n$-tuples satisfying the system of the first
      $k$ equations of
(\ref{system}) is
      $|\mathbb{Z}^1_m[X]|^{n-k}$. As we have shown above, for each such
      $n$-tuple, there are at least
      $|T|^{n-k}$
      $n$-tuples
      $(\tau_1,\tau_2,\dots, \tau_n)$ satisfying the last
      $k$ equations of
(\ref{system}) for
      $\widetilde{\tau}_1=\widetilde{\tau}_2=\dots=\widetilde{\tau}_k=0$. We are left to notice that
      $|\mathcal{M}_{m,p,q}|=|\mathbb{Z}^1_m[X]|\cdot|T|$.
      Therefore, if for some
      $n$-tuple
      $(x_1,x_2,\dots,x_k,s_{k+1},\dots,s_n)$ there are more than
      $|T|^{n-k}$ solutions then the cardinality of the set of all solutions of system
(\ref{system}) is greater than
      $|\mathbb{Z}^1_m[X]|^{n-k}\cdot|T|^{n-k}=|\mathcal{M}_{m,p,q}|^{n-k}$.
      This contradicts to the assumption that the  system
      $\{g_1,g_2,\dots,g_k\}$ is uniformly distributed on
      $\mathcal{M}_{m,p,q}$.

      So, if
      $\widetilde{\tau}_1=\widetilde{\tau}_2=\dots=\widetilde{\tau}_k=0$ then for any
      $n$-tuple of the form
      $(x_1,x_2,\dots,x_k,s_{k+1},\dots,s_n)$ there are exactly
      $|T|^{n-k}$
      $n$-tuples
      $(\tau_1,\tau_2,\dots, \tau_n)$ satisfying the last
      $k$ equations of system
(\ref{system}). Consequently, for any
      $n$-tuple of the form
      $(x_1,x_2,\dots,x_k,s_{k+1},\dots,s_n)\in (\mathbb{Z}^1_m[X])^n$ the map
      $\xi_{x_1,x_2,\dots,x_k,s_{k+1},\dots, s_n}$ is a surjective homomorphism. So, for any such
      $n$-tuple the system of equations of the form
      $\displaystyle{\sum_{j=1}^n \tau_j\cdot \opartial_j g_i(x_1,x_2,\dots,x_{k},s_{k+1},\dots,s_n)}=\widetilde{\tau}_i$,
      $i=1,2,\dots,k$
      has a solution. In particular, the system of equations
      \begin{equation*}
        \begin{cases}
          \sum_{j=1}^n \tau_j\cdot \opartial_j g_1(x_1,x_2,\dots, x_n)=t_1,\\
          \sum_{j=1}^n \tau_j\cdot \opartial_j g_2(x_1,x_2,\dots, x_n)=t_2,\\
          \dots\dots\dots\dots\dots\dots \\
          \sum_{j=1}^n \tau_j\cdot \opartial_j g_k(x_1,x_2,\dots, x_n)=t_k,\\
        \end{cases}
      \end{equation*}
      has a solution.

      Let
      $\tau_j=\sum_{l=1}^n t_l\alpha_{jl}$ for
      $j=1,2,\dots, n$. Then, considering the coefficients by
      $t_1,t_2,\dots,t_k$ in the right-hand side and in the left-hand side  we obtain
      $$\sum_{j=1}^n \alpha_{j,l}\opartial_j g_a(x_1,x_2,\dots,x_n)=\delta_{l,a},$$
      $l=1,2,\dots,k$,
      $a=1,2,\dots, n$, and
      $\delta_{l,a}$  is Kronecker delta.

      Therefore,
      \begin{equation}\label{matjac}
        \begin{pmatrix}
          \alpha_{1,1} & \alpha_{2,1} & \dots & \alpha_{n,1}\\
          \alpha_{1,2} & \alpha_{2,2} & \dots & \alpha_{n,2}\\
          \dots        & \dots        & \dots &  \dots \\
          \alpha_{1,k} & \alpha_{2,k} & \dots & \alpha_{n,k}\\
        \end{pmatrix}
        \cdot
        \overline{\mathcal{J}}_{p,q,m}(g_1,g_2,\dots, g_k)=\mathbf{E}_{k}.
      \end{equation}
      Since
      $|\mathbf{E}_k|=1$ equality
(\ref{matjac}) implies that
      the unit  of
      $\mathbb{Z}_{p,q,m}[X]$ is a linear combination of
      $k\times k$ minors of
      $\overline{\mathcal{J}}(g_1,g_2,\dots,g_k)$. Therefore, the
      ideal generated by all
      $k\times k$ minors of
      $\overline{\mathcal{J}}(g_1,g_2,\dots,g_k)$ coincides with
      $\mathbb{Z}_{p,q,m}[X]$. Since this holds for any positive
      integers
      $p,q,m$, such that
      $m\geqslant 2$, Lemma~%
\ref{unimod-prim} implies that the system of elements
      $\{g_1,g_2,\dots, g_k\}$ is primitive.
  \end{proof}
  
\end{document}